\documentclass[12pt,a4paper]{article}
\title{\bf Exponential Convergence in $L^p$-Wasserstein Distance for Diffusion Processes without Uniformly Dissipative Drift}
\author{Dejun Luo$^a$\footnote{Email: luodj@amss.ac.cn. Partly supported by the Key Laboratory of RCSDS, CAS (2008DP173182), NSFC (11571347) and AMSS (Y129161ZZ1)}
\quad Jian Wang$^b$\footnote{Email: jianwang@fjnu.edu.cn. Partly supported  by NSFC (11201073 and 11522106), NSFFJ (2015J01003) and PNAIA (IRTL1206).}
\vspace{3mm}\\
{\footnotesize $^a$Institute of Applied Mathematics, Academy of Mathematics and Systems Science,}\\
{\footnotesize Chinese Academy of Sciences, Beijing 100190, China}\\
{\footnotesize $^b$School of Mathematics and Computer Science, Fujian Normal University,
Fuzhou 350007, China}}
\date{}

\usepackage{amssymb,amsmath,amsfonts,amsthm,array,color}

\setlength{\hoffset}{-0.4mm} \setlength{\voffset}{-0.4mm}
\setlength{\textwidth}{160mm} \setlength{\textheight}{235mm}
\setlength{\topmargin}{0mm} \setlength{\oddsidemargin}{0mm}
\setlength{\evensidemargin}{0mm} \setlength\arraycolsep{1pt}
\setlength{\headsep}{0mm} \setlength{\headheight}{0mm}

\def\R{\mathbb{R}}
\def\E{\mathbb{E}}
\def\P{\mathbb{P}}
\def\Id{\textup{Id}}
\def\C{\mathcal{C}}

\def\ch{{\bf 1}}

\def\d{\textup{d}}

\def\eps{\varepsilon}
\def\<{\langle}
\def\>{\rangle}

\newtheoremstyle{newthm}% name
 {3pt}%      Space above, empty = `usual value'
 {3pt}%      Space below
 {\itshape}% Body font
 {}%         Indent amount (empty = no indent, \parindent = para indent)
 {}% Thm head font
 {\textbf{.}}%        Punctuation after thm head
 {.5em}%     Space after thm head: " " = normal interword space;
       %       \newline = linebreak
 {\thmname{\textbf{#1}}\thmnumber{ \textbf{#2}}\thmnote{ {\textbf{(#3)}}}}% Thm head spec

\theoremstyle{newthm}

\newtheorem{theorem}{Theorem}[section]
\newtheorem{lemma}[theorem]{Lemma}       %与theorem共用一个计数器
\newtheorem{corollary}[theorem]{Corollary}
\newtheorem{proposition}[theorem]{Proposition}
\newtheorem{remark}[theorem]{Remark}
\newtheorem{example}[theorem]{Example}

%other colors: magenta, green, yellow ...

\begin{document}

\maketitle

\makeatletter % '@' is now a normal "letter" for TeX
\renewcommand\theequation{\thesection.\arabic{equation}}
\@addtoreset{equation}{section}
\makeatother % '@' is restored as a "non-letter" character for TeX

\vspace{-6mm}

\begin{abstract} By adopting the coupling by reflection and
choosing an auxiliary function which is convex near infinity, we
establish the exponential convergence of diffusion semigroups
$(P_t)_{t\ge0}$ with respect to the standard $L^p$-Wasserstein
distance for all $p\in[1,\infty)$. In particular, we show that for
the It\^o stochastic differential equation
  $$\d X_t=\d B_t+b(X_t)\,\d t,$$
if the drift term $b$ satisfies that for any $x,y\in\R^d$,
  $$\langle b(x)-b(y),x-y\rangle\le \begin{cases}
  K_1|x-y|^2,&  |x-y|\le L;\\
  -K_2|x-y|^2,&  |x-y|> L
  \end{cases}$$
holds with some positive constants $K_1$, $K_2$ and $L>0$, then there is a constant $\lambda:=\lambda(K_1,K_2,L)>0$ such that for
all $p\in[1,\infty)$, $t>0$ and $x,y\in\R^d$,
  $$W_p(\delta_x P_t,\delta_y P_t)\leq Ce^{-\lambda t/p}
  \begin{cases}
  |x-y|^{1/p}, & \mbox{if } |x-y|\le 1;\\
  |x-y|, & \mbox{if } |x-y|> 1.
  \end{cases}$$
where $C:=C(K_1,K_2,L,p)$ is a positive constant.
This improves the main result in \cite{Eberle} where the exponential
convergence is only proved for the $L^1$-Wasserstein distance.
\end{abstract}

\textbf{Keywords:} Exponential convergence, $L^p$-Wasserstein distance, coupling by reflection,
diffusion process

\textbf{MSC (2010):} 60H10

\section{Introduction}\label{section1}

In this paper we consider the following It\^o stochastic differential equation
  \begin{equation}\label{Ito-SDE}
  \d X_t=\sigma\,\d B_t+b(X_t)\,\d t,
  \end{equation}
where $(B_t)_{t\ge0}$ is a standard $d$-dimensional Brownian motion,
$\sigma\in\R^{d\times d}$ is a non-degenerate constant matrix, and
$b:\R^d\to\R^d$ is a Borel measurable vector field. Recently there
are intensive studies on the existence of the unique strong solution
to \eqref{Ito-SDE} with singular drift $b$. For example, if
$\sigma=c\, \Id$ for some constant $c\neq 0$ and $b$ is bounded and
H\"older continuous, Flandoli et al.\ \cite{FGP10a} proved that
\eqref{Ito-SDE} generates a unique flow of diffeomorphisms on
$\R^d$. The results are recently extended by F.-Y. Wang in
\cite{Wang2014} to (infinite dimensional) stochastic differential
equations with a nice multiplicative noise and a locally
Dini continuous drift. From these results we see that when
the diffusion coefficient $\sigma$ is non-degenerate, quite weak
conditions on $b$ are sufficient to guarantee the well-posedness of
\eqref{Ito-SDE}, which will be assumed throughout this paper.
Moreover, we assume the solution $(X_t)_{t\ge0}$ has finite moments
of all orders.

\iffalse
Suppose that there exists a non-negative function $g:[0,\infty)\to[0,\infty)$ such that
  \begin{equation}\label{drift-1}
  \int_0^sg(u)\,\d u<\infty,\quad s>0
  \end{equation}
and
  \begin{equation}\label{drift}
  \sup_{|x-y|= r} \big\langle\sigma^{-1}( b(x)-b(y)), \sigma^{-1}( x-y)\big\rangle \le r g(r),\quad r>0.
  \end{equation}
Inequality \eqref{drift} generalizes the standard condition that $g(r)=cr$ for some constant $c>0$,
which is the case if $b$ is Lipschitz continuous. Note that \eqref{drift-1} and \eqref{drift}
hold if $b$ is uniformly continuous on $\R^d$, and indeed we can take in this case $g$ as the
modulus of continuity of $b$. In particular, we can deal with non-local Lipschitz coefficients
and even H\"{o}lder continuous coefficients.
\fi

Denote by $(P_t)_{t\ge0}$ the semigroup associated to
\eqref{Ito-SDE}. If the initial value $X_0$ is distributed as $\mu$,
then for any $t>0$, the distribution of $X_t$ is $\mu P_t$. We are
concerned with the long-time behavior of  $P_t$ as $t$ tends to
$\infty$, more exactly, the rate of convergence to equilibrium of
$\delta_x P_t$ for any $x\in\R^d$. This problem is of fundamental
importance in the study of Markov processes, and has been attacked
by a large number of researchers in the literature. To the authors'
knowledge, there are at least three approaches for obtaining
quantitative ergodic properties. The first one is known as Harris'
theorem which combines Lyapunov type conditions and the notion of
small set, see \cite{Meyn, MT93, Fort05} for systematic
presentations. Recently this method is further developed in
\cite{Hairer, HM2, Bu} with applications to stochastic partial
differential equations (SPDE) and stochastic delay differential
equations (SDDE). The second method employs functional inequalities
to characterize the rate of convergence to equilibrium for
$(P_t)_{t\ge0}$. It is a classical result that for symmetric Markov
processes the Poincar\'e inequality is equivalent to the exponential
decay of the semigroup. More general functional inequalities were
introduced in \cite{Wang00, R-Wang01, Wangbook} to describe
different convergence rates. It was shown in \cite{BCG} that the two
methods above can be linked together by Lyapunov--Poincar\'e
inequalities. We also would like to mention that Bolley et al.
\cite{BGG} recently studied the exponential decay in the
$L^2$-Wasserstein distance $W_2$ via the so-called \emph{WJ}
inequality, by using the explicit formula for time derivative of
$W_2$ along solutions to the Fokker--Planck equation (see
\cite[Theorem 2.1]{BGG}). An application to the granular media was
given in \cite{BGG13}, yielding uniformly exponential convergence to
equilibrium in the presence of non-convex interaction or confinement
potentials.

Yet there is another approach for studying exponential convergence of the semigroup $(P_t)_{t\ge0}$
corresponding to the SDE \eqref{Ito-SDE} considered in this paper, that is, the coupling method.
If the drift vector field $b$ fulfills certain dissipative properties, this latter method
provides explicit rate of convergence to equilibrium in a straightforward way, see e.g. \cite{CG, Eberle11} and
the preprint \cite{Eberle}. The present work is motivated by \cite{Eberle11, Eberle} where the author obtained
exponential decay of $(P_t)_{t\ge0}$ when the  drift $b$ is assumed to be only dissipative at infinity, see
the introduction below for more details.

A good tool for measuring the deviation between probability
distributions is the Wasserstein-type distances which are defined as follows.
Let $\psi\in C^2([0,\infty))$ be a strictly increasing function satisfying $\psi(0)=0$.
Given two probability measures $\mu$ and $\nu$ on $\R^d$, we define the following quantity
  $$W_\psi(\mu,\nu)=\inf_{\Pi\in\C(\mu,\nu)}\int_{\R^d\times\R^d} \psi(|x-y|)\,\d\Pi(x,y),$$
where $|\cdot|$ is the Euclidean norm and $\C(\mu,\nu)$ is the collection of measures on
$\R^d\times\R^d$ having $\mu$ and $\nu$ as marginals. When $\psi$ is concave, the above
definition gives rise to a Wasserstein distance $W_\psi$ in the space $\mathcal P(\R^d)$ of probability
measures $\nu$ on $\R^d$ such that $\int \psi(|z|)\,\nu(\d z)<\infty$.
If $\psi(r)=r$ for all $r\geq 0$, then $W_\psi$ is the standard $L^1$-Wasserstein distance
(with respect to the Euclidean norm $|\cdot|$), which will be denoted by $W_1(\mu,\nu)$
throughout this paper. We are also concerned with the $L^p$-Wasserstein distance
$W_p$ for all $p\in[1,\infty)$, i.e.
  $$W_p(\mu,\nu)=\inf_{\Pi\in\C(\mu,\nu)}\left(\int_{\R^d\times\R^d} |x-y|^p\,\d\Pi(x,y)\right)^{1/p}.$$
Equipped with $W_p$, the totality $\mathcal P_p(\R^d)$ of probability measures having finite moment of order $p$
becomes a complete metric space.

In this paper, we shall establish the exponential convergence of
the map $\mu \mapsto \mu P_t$ with respect to the $L^p$-Wasserstein
distance $W_p$ for all $p\geq 1$. We first recall some known
results.

\begin{theorem}[Uniformly dissipative case]\label{theorem1}
Suppose that $\sigma=\Id$ and there exists a constant $K>0$ such that
  \begin{equation}\label{diss-1}
  \langle b(x)-b(y),x-y\rangle\le -K|x-y|^2\quad \mbox{for all } x,y\in\R^d.
  \end{equation}
Then, for any $p\ge1$ and $t>0$,
  \begin{equation}\label{w2}
  W_p(\mu P_t,\nu P_t)\le e^{-Kt}\,W_p(\mu,\nu)\quad \mbox{for all } \mu,\nu\in\mathcal P_p(\R^d).
  \end{equation}
\end{theorem}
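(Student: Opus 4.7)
The plan is to use the classical synchronous coupling, which is the natural tool under a global one-sided Lipschitz/dissipativity condition such as \eqref{diss-1}. First I would fix an optimal $W_p$-coupling $\Pi\in\C(\mu,\nu)$, sample $(X_0,Y_0)\sim\Pi$, and let $X_t,Y_t$ be the strong solutions of \eqref{Ito-SDE} driven by the \emph{same} Brownian motion $(B_t)_{t\ge 0}$ and starting from $X_0,Y_0$ respectively. Because $\sigma=\Id$, the noises cancel in the difference, leaving the purely deterministic pathwise equation
$$\d(X_t-Y_t)=\bigl(b(X_t)-b(Y_t)\bigr)\,\d t.$$

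Next I would differentiate $|X_t-Y_t|^2$ and invoke \eqref{diss-1}: almost surely,
$$\frac{\d}{\d t}|X_t-Y_t|^2=2\<b(X_t)-b(Y_t),X_t-Y_t\>\le -2K|X_t-Y_t|^2.$$
Gronwall's lemma then yields the pathwise contraction $|X_t-Y_t|\le e^{-Kt}|X_0-Y_0|$, and raising to the $p$-th power gives $|X_t-Y_t|^p\le e^{-pKt}|X_0-Y_0|^p$ for all $t\ge 0$.

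The joint law of $(X_t,Y_t)$ belongs to $\C(\mu P_t,\nu P_t)$, so the definition of the $L^p$-Wasserstein distance together with the above bound produces
$$W_p^p(\mu P_t,\nu P_t)\le \E|X_t-Y_t|^p\le e^{-pKt}\,\E|X_0-Y_0|^p=e^{-pKt}W_p^p(\mu,\nu),$$
and taking $p$-th roots yields \eqref{w2}. I do not foresee any serious obstacle here: the only subtle point is to justify the chain rule above when $b$ is merely Borel measurable, which is absorbed into the standing well-posedness and moment assumptions on \eqref{Ito-SDE} (for instance through a mollification/approximation argument), together with the classical existence of an optimal $W_p$-coupling on $\mathcal P_p(\R^d)$. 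The genuine difficulty of the paper arises instead once \eqref{diss-1} is relaxed to dissipativity only at infinity: synchronous coupling then fails to contract inside the ``bad'' region $\{|x-y|\le L\}$, and one must combine reflection coupling with a carefully chosen concave auxiliary function, as announced in the abstract.
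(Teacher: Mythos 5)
Your argument is correct and is exactly the route the paper has in mind: Theorem \ref{theorem1} is stated with the remark that it follows ``by simply using the synchronous coupling'', with a reference to \cite{BGG} for the short proof, and your synchronous coupling plus Gronwall plus optimal-coupling argument is precisely that proof. No substantive differences to report.
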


The proof of this result is quite straightforward, by simply using the synchronous coupling
(also called the coupling of marching soldiers in \cite[Example 2.16]{Chen}), see e.g.
\cite[p.2432]{BGG} for a short proof.

In applications, the so-called uniformly dissipative condition \eqref{diss-1} is too strong.
Indeed, it follows from  \cite[Theorem 1]{RS} or \cite[Remark 3.6]{BGG} (also see \cite[Section 3.1.2, Theorem 1]{CG})  that \eqref{w2} holds
for any probability measures $\mu$ and $\nu$ if and only if \eqref{diss-1} holds for all $x$, $y\in\R^d$.
The first breakthrough to get rid of this restrictive condition was done recently by Eberle in \cite{Eberle},
at the price of multiplying a constant $C\geq 1$ on the right hand side of \eqref{w2}.
To state the main result in \cite{Eberle}, we need the following notation which measures
the dissipativity of the drift $b$:
  \begin{equation}\label{kappa}
  \kappa(r):=\sup\bigg\{\frac{\<\sigma^{-1}(x-y),\sigma^{-1}(b(x)-b(y))\>}{2|\sigma^{-1}(x-y)|}:
  x,y\in\R^d \mbox{ with } |\sigma^{-1}(x-y)|=r\bigg\}.
  \end{equation}
As in \cite[(2.3)]{Eberle}, we shall assume throughout the paper that
  $$\int_0^s \kappa^+(r)\,\d r<+\infty\quad \mbox{for all } s>0.$$
This technical condition will be used in Section \ref{section} to
construct the auxiliary function.

\begin{theorem}[{\cite[Corollary 2.3]{Eberle}}]\label{1-thm-2}
Suppose that the vector field $b$ is locally Lipschitz continuous, and there is a constant $c>0$ such that
  \begin{equation}\label{diss-2}
  \kappa(r)\le -cr\quad \mbox{for all } r>0 \mbox{ large enough}.
  \end{equation}
Then there exist positive constants $C$, $\lambda>0$ such that for any $t>0$ and $\mu,\nu\in\mathcal P_1(\R^d)$,
  $$W_1(\mu P_t,\nu P_t)\le Ce^{-\lambda t}W_1(\mu,\nu).$$
\end{theorem}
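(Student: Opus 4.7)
The plan is to follow the coupling by reflection strategy of Eberle. For any $x,y \in \R^d$ with $x \neq y$, I would construct a Markovian coupling $(X_t, Y_t)$ of two copies of the SDE \eqref{Ito-SDE} with $X_0 = x$, $Y_0 = y$ by specifying that the driving noise of $Y_t$ is obtained from that of $X_t$ via reflection across the unit vector $\sigma^{-1}(X_t - Y_t)/|\sigma^{-1}(X_t - Y_t)|$, run until the coupling time $T := \inf\{t \ge 0 : X_t = Y_t\}$ after which the two processes coincide. Setting $r_t := |\sigma^{-1}(X_t - Y_t)|$, It\^o's formula yields
\begin{equation*}
\d r_t = 2\, \d \tilde B_t + \alpha_t\, \d t, \quad t < T,
\end{equation*}
where $\tilde B$ is a one-dimensional Brownian motion and, by the definition \eqref{kappa}, $\alpha_t \le 2\kappa(r_t)$.

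The main analytic task is to design an auxiliary distance $f \in C^2([0,\infty))$ that is strictly increasing, concave, and comparable to the identity ($c_1 r \le f(r) \le c_2 r$), so that $e^{\lambda t} f(r_t)$ is a local supermartingale up to $T$. Following \cite{Eberle}, I would set
\begin{equation*}
\varphi(r) = \exp\Bigl(-\tfrac{1}{2}\int_0^r s\,\kappa^+(s)\,\d s\Bigr), \qquad \Phi(r) = \int_0^r \varphi(s)\, \d s,
\end{equation*}
fix a cut-off $R_1$ slightly larger than the radius beyond which $\kappa(r) \le -cr$, put $g(r) = 1 - \tfrac{\lambda}{2}\int_0^{r \wedge R_1} \Phi(s)/\varphi(s)\, \d s$, and finally $f(r) = \int_0^r \varphi(s)\,g(s)\,\d s$. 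The constant $\lambda>0$ and the cut-off $R_1$ are to be tuned simultaneously so that $g$ stays in $[\tfrac12,1]$ (which gives $f \asymp r$), and so that $f$ satisfies the pointwise differential inequality
\begin{equation*}
2 f''(r) + r\,\kappa(r)\, f'(r) \le -\lambda f(r) \quad \mbox{for all } r > 0.
\end{equation*}
The delicate balance is that for $r$ large, the term $r\,\kappa(r)\, f'(r)$ provides decay thanks to \eqref{diss-2}, while near the origin $\kappa$ may be positive, and it is then the strict concavity ($f''<0$) that compensates; the assumption $\int_0^s \kappa^+(r)\,\d r<+\infty$ ensures $\varphi$ is bounded below on compacts, so that $f$ does not collapse near zero.

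With $f$ so chosen, It\^o's formula together with the above inequality gives that the drift of $\d(e^{\lambda t} f(r_t))$ is non-positive on $\{t<T\}$, hence
\begin{equation*}
\E\bigl[f(r_t)\, \ch_{\{t < T\}}\bigr] \le e^{-\lambda t} f(r_0).
\end{equation*}
Since $X_t = Y_t$ on $\{t \ge T\}$, this extends to $\E[f(|\sigma^{-1}(X_t - Y_t)|)] \le e^{-\lambda t} f(r_0)$ for every $t>0$. Using $f(r)\asymp r$ and the non-degeneracy of $\sigma$ then yields $\E|X_t - Y_t| \le C e^{-\lambda t}|x-y|$. Integrating this pointwise coupling bound against an optimal $W_1$-coupling of $\mu$ and $\nu$ gives the claimed $W_1$-contraction.

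The principal obstacle is the construction of $f$: one has to arrange concavity, comparability with the identity, and the ODE inequality at the same time, and this is precisely where both the integrability condition on $\kappa^+$ and the dissipativity condition \eqref{diss-2} at infinity enter in an essential way. Once the right $f$ is found, everything else reduces to a standard supermartingale-plus-duality argument.
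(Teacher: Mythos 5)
Your outline is essentially a reconstruction of Eberle's original argument, and that is exactly what the paper does for this statement: Theorem \ref{1-thm-2} is quoted from \cite[Corollary 2.3]{Eberle} and is not reproved here, so your route (coupling by reflection, a concave auxiliary function $f=\int_0^r\varphi\, g$ with $\varphi$ killing the positive part of $\kappa$, a supermartingale estimate for $e^{\lambda t}f(r_t)$, then integration against an optimal coupling) coincides with the cited proof rather than with the machinery developed in this paper. The paper's own contribution (Theorem \ref{main-result}) deliberately replaces the concave $f$ by a function $\psi$ that is \emph{convex near infinity}, designed via Chen--Wang's variational formula, precisely because the concave construction you describe is comparable to $r$ and therefore can only control $\E|X_t-Y_t|$, i.e.\ $W_1$; the convex $\psi$ grows like $e^{\eps r^2/2}$, dominates $r^p$ for every $p\ge 1$, and this is what buys the $W_p$-contraction for all $p$. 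So your approach proves the stated $W_1$-result, but would not extend to the paper's main theorem.

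Two points to repair in your write-up. First, a normalization mismatch: with the definition \eqref{kappa} used here (note the factor $2|\sigma^{-1}(x-y)|$ in the denominator), the radial process satisfies $\d r_t=2\,\d W_t+\beta_t\,\d t$ with $\beta_t\le 2\kappa(r_t)$, so the target inequality must read $2f''(r)+2\kappa(r)f'(r)\le-\lambda f(r)$, and correspondingly $\varphi(r)=\exp\big(-\int_0^r\kappa^+(s)\,\d s\big)$; your $s\,\kappa^+(s)$ inside the exponential and the drift term $r\,\kappa(r)f'(r)$ belong to Eberle's normalization of $\kappa$, and mixing the two conventions makes the displayed inequality wrong as stated, though the construction goes through verbatim once the convention is fixed. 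Second, with the cut-off at $R_1$ the function $f$ is only $C^1$ with a jump of $f''$ at $R_1$, not $C^2$; this is harmless (smooth the kink or use the It\^o--Tanaka formula, as Eberle does), but it should be acknowledged, and note that $\varphi$ is bounded below globally, not merely on compacts, because $\kappa^+\equiv 0$ beyond the dissipativity radius and $\int_0^s\kappa^+(r)\,\d r<\infty$; this global lower bound is what gives $f\asymp r$ on all of $[0,\infty)$ and hence the comparison with $W_1$.
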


In particular, when $\sigma=\Id$, the condition \eqref{diss-2} holds true
if \eqref{diss-1} is satisfied only for large $|x-y|$, that is,
  $$\langle b(x)-b(y),x-y\rangle\le -K|x-y|^2, \quad |x-y|\ge L$$
for some  constant $L>0$ large enough. Therefore, \cite[Corollary 2.3]{Eberle} implies that
the map $\mu \mapsto \mu P_t$ converges exponentially with respect to the standard
$L^1$-Wasserstein distance $W_1$ under locally non-dissipative drift, see \cite[Example 1.1]{Eberle}
for more details. The proof of \cite[Corollary 2.3]{Eberle} is based on the coupling by reflection
of diffusion processes and a carefully constructed concave function, cf. \cite[Section 2]{Eberle}.
A number of direct consequences are presented in \cite[Section 2.2]{Eberle} which indicate
that the convergence result as \cite[Corollary 2.3]{Eberle} is extremely useful.

However, \cite[Corollary 2.3]{Eberle} is not satisfactory in the sense that no
information on the $L^2$-Wasserstein distance $W_2$ is provided. This fact has also been
noted in \cite[Section 7.1, Remark 19]{CG}, saying that ``the reflection coupling cannot furnish some
information on $W_2$''. Our main result of this paper shows that this is not the case.

\begin{theorem}\label{main-result}
Assume that there are constants $c>0$ and $\eta\ge 0$ such that for all $r\ge \eta$, one has
  \begin{equation}\label{main-result-1}
  \kappa(r)\leq -cr.
  \end{equation}
For $\eps\in (0,c)$, define
  $$C_0(\eps)=\max\bigg\{\frac{2e^2}\eps \Big(1+\frac2{\sqrt \eps}\Big) \sqrt{\frac2{c-\eps}},\ \frac{2+\sqrt\eps}{\eps(1-e^{-2})} \bigg[\frac{2\sqrt{2} e^2}{\sqrt{\eps(c-\eps)}} + \frac{1}{c-\eps}\bigg]\bigg\}$$
and
  $$\lambda=\frac{\min\{2,2/\eps\}}{C_0(\eps)} \exp\bigg(-\frac c2 \eta^2-\int_0^\eta \kappa^+(s)\,\d s\bigg).$$
Then for any $p>1$, $t>0$ and any $x$, $y\in\R^d$, it holds
  \begin{equation}\label{main-result-2}
  W_p(\delta_x P_t,\delta_y P_t)\leq Ce^{-\lambda t/p}
  \begin{cases}
  |x-y|^{1/p}, & \mbox{if } |x-y|\le 1;\\
  |x-y|, & \mbox{if } |x-y|> 1.
  \end{cases}
  \end{equation}
where $C:=C( c, \eta, \varepsilon, p)>0$ is a positive constant.
\end{theorem}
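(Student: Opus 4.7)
The plan is to combine the reflection coupling of \cite{Eberle11,Eberle} with a tailored auxiliary function which, unlike the bounded concave function used for $W_1$, is designed to grow like $r^p$ at infinity while retaining Eberle-type behaviour near the origin. Running the Markovian reflection coupling $(X_t,Y_t)$ of \eqref{Ito-SDE}, setting $Z_t=\sigma^{-1}(X_t-Y_t)$ and $r_t=|Z_t|$, a standard It\^o calculation shows that up to the coupling time $\tau:=\inf\{t\ge 0:r_t=0\}$,
\[
\d r_t = 2\,\d W_t + \beta_t\,\d t,\qquad \beta_t\le 2\kappa(r_t),
\]
where $W_t$ is a one-dimensional Brownian motion. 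This reduces the problem to controlling a one-dimensional diffusion with a one-sided drift bound.

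The heart of the argument is the construction of a $C^2$ function $g:[0,\infty)\to[0,\infty)$ with $g(0)=0$, $g'>0$, satisfying (i) $g(r)\asymp r$ on $[0,1]$ and $g(r)\asymp r^p$ on $[1,\infty)$, and (ii) the pointwise generator inequality
\[
2g''(r)+2\kappa(r)g'(r)\le -\lambda\,g(r),\qquad r>0,
\]
with $\lambda$ as in the statement. Writing $g'(r)=\phi(r)\,h(r)$ in the spirit of Eberle, where $\phi(r)=\exp\bigl(-\int_0^r \kappa^+(s)\,\d s\bigr)$ absorbs the possibly positive part of $\kappa$ on $[0,\eta]$, one is left to choose a positive $h$ so that the prescribed normalization and the reduced inequality
\[
\phi(r)\bigl[h'(r)-\kappa^-(r)h(r)\bigr]\le -\tfrac{\lambda}{2}\,g(r)
\]
both hold. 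For $r\ge\eta$ the estimate $\kappa^-(r)\ge cr$ together with $g'(r)\asymp p r^{p-1}$ produces a term $\asymp -cp r^p$, which dominates both the convex contribution $2g''(r)\asymp 2p(p-1)r^{p-2}$ and the prescribed $-\tfrac{\lambda}{2}g(r)\asymp -\tfrac{\lambda}{2}r^p$ as soon as $\lambda<2cp$ with sufficient slack; the parameter $\eps\in(0,c)$ in $C_0(\eps)$ quantifies exactly that slack, and the prefactors in $C_0(\eps)$ track what is needed to patch the convex tail with the concave/linear behaviour near the origin.

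With such a $g$ in hand, It\^o's formula on $g(r_t)$ (standard localization of the martingale part $\int 2g'(r_s)\,\d W_s$, and the convention $g(r_t)=0$ for $t\ge\tau$) turns (ii) into
\[
\E[g(r_t)]\le e^{-\lambda t}\,g(r_0).
\]
Property (i) then gives a two-sided comparison: on one hand $r^p\le C\,g(r)$ for every $r\ge 0$ (since $r^p\le r$ for $r\le 1$ and $p\ge 1$), and on the other hand
\[
g(|\sigma^{-1}(x-y)|)\le C'\bigl(|x-y|\,\ch_{\{|x-y|\le 1\}}+|x-y|^p\,\ch_{\{|x-y|>1\}}\bigr).
\]
Combining these with the elementary bound $W_p(\delta_xP_t,\delta_yP_t)^p\le \E[|X_t-Y_t|^p]\le C\,\E[g(r_t)]$ (absorbing the norm of $\sigma$ into constants) and taking $p$-th roots yields \eqref{main-result-2}.

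The main obstacle is the second step: constructing $g$ so that the concave near-origin and convex near-infinity regimes glue together in a $C^2$ way while preserving (ii) with a single rate $\lambda$. The intricate constants $C_0(\eps)$ and $\lambda$ displayed in the theorem are the explicit outcome of that bookkeeping; once the construction is carried out, the remaining steps (It\^o, two-sided comparison, $p$-th root) are routine.
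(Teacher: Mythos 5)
There is a genuine gap at exactly the point you flag as ``the heart of the argument'': the existence of a $C^2$ function $g$ with $g\asymp r$ on $[0,1]$, $g\asymp r^p$ at infinity, and $2g''+2\kappa g'\le-\lambda g$ on all of $(0,\infty)$ \emph{with the specific $\lambda$ of the theorem} is asserted, not proved. Your verification is only at the level of asymptotic orders in the dissipative tail ($2p(p-1)r^{p-2}-2cpr^{p}\lesssim-\lambda r^{p}$), and says nothing quantitative about the region $[0,\eta]$ where $\kappa$ may be positive, nor about the concave-to-convex gluing, where $g''$ changes sign and the inequality must be carried by $\kappa(r)g'(r)$ alone. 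Moreover the claim that the constants $C_0(\eps)$ and $\lambda$ in the statement ``are the explicit outcome of that bookkeeping'' cannot be right as stated: in the paper these constants arise from comparing the Gaussian-tail function $\psi_1(r)=\int_0^r e^{cs^2/2}\int_s^\infty e^{-(c-\eps)u^2/2}\,\d u\,\d s$ with $\psi_2(r)=(e^{\eps r^2/2}-1)/(r(1+r))$ (the factors $e^2$, $1-e^{-2}$, $2/\sqrt\eps$ come from evaluating Gaussian tail bounds at $r=2/\sqrt\eps$), i.e.\ from an auxiliary function whose tail is $e^{\eps r^2/2}$, not $r^p$. A polynomial-tail construction would produce different constants, and, since your $g$ depends on $p$, a priori a $p$-dependent rate; the theorem requires one $\lambda$ valid for every $p$, so you would additionally have to show your family $\{g_p\}$ can be built with a common $\lambda$ --- none of this is done.

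It is worth noting how the paper avoids this difficulty, because it explains why your route, if completed, would be genuinely different. The paper uses a single, $p$-independent function $\psi$ defined in \eqref{af}, convex near infinity with tail $\asymp e^{\eps r^2/2}$, so that \eqref{af-con} holds with one $\lambda$ and $r^p\le C_2\psi(r)$ for every $p$ simultaneously (see \eqref{2-estimate-2-1}--\eqref{2-estimate-2}); the price is that $\psi(r_0)$ grows far too fast to give the bound $C|x-y|$ for large $|x-y|$ directly, so the paper first proves \eqref{small-1} for $|x-y|\le\eta/C_6$ and then handles large $|x-y|$ by subdividing the segment from $x$ to $y$ and using the triangle inequality for $W_p$. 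Your plan would read the large-$|x-y|$ bound directly off $g(r_0)\asymp r_0^p$ and dispense with the chaining step, which is an attractive simplification of the conclusion --- but the missing construction of $g$ (uniformly in $r$, with the stated $\lambda$, uniformly in $p$) is precisely the nontrivial content, so as it stands the proposal does not constitute a proof of the statement.
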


Theorem \ref{main-result} above does provide new conditions on the
drift term $b$ such that the associated semigroup $(P_t)_{t\ge0}$ is
exponentially convergent with respect to the $L^p$-Wasserstein
distance $W_p$ for all $p\ge 1$. The reason that we can obtain the
exponential convergence in $W_p$ for all $p\ge 1$, not only $W_1$, is
due to our particular choice of the auxiliary function which is
convex near infinity. It is designed by using
Chen--Wang's famous variational formula for the principal eigenvalue of
one-dimensional diffusion operator, see for instance
\cite{Chen-Wang-97} or \cite[Section 3.4]{Chen}. The reader can
refer to \cite{Chen12} and the references therein for recent studies
on this topic.

The assertion of Theorem \ref{main-result} can be slightly strengthened if \eqref{main-result-1} is replaced by a stronger condition.

\begin{theorem}\label{thm-theta}
Assume that there are constants $c>0$, $\eta>0$ and $\theta>1$ such that for all $r\ge \eta$, one has
  \begin{equation}\label{thm-theta-1}
  \kappa(r)\leq -cr^\theta.
  \end{equation}
Let $\lambda$ be defined as in Theorem $\ref{main-result}$ with $c$ replaced by $c\eta^{\theta-1}$. Then there is a positive constant $C$ such that for all $t>0$ and $x,y\in\R^d$, it holds
  \begin{equation}\label{thm-theta-2}
  W_p(\delta_x P_t,\delta_y P_t)\leq Ce^{-\lambda t/p}
  \begin{cases}
  |x-y|^{1/p}, & \mbox{if } |x-y|\le 1;\\
  |x-y|\wedge \frac{1}{t\wedge 1}, & \mbox{if } |x-y|> 1.
  \end{cases}
  \end{equation}
\end{theorem}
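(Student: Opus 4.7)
The plan is to bootstrap Theorem \ref{main-result} and extract the additional smoothing factor $1/(t\wedge 1)$ from the stronger dissipativity \eqref{thm-theta-1}. A first observation is that $cr^\theta\ge c\eta^{\theta-1}r$ for all $r\ge\eta$, so \eqref{thm-theta-1} immediately entails \eqref{main-result-1} with $c$ replaced by $c\eta^{\theta-1}$. Theorem \ref{main-result} is therefore directly applicable and already delivers the $|x-y|$-dependent part of \eqref{thm-theta-2} with the stated $\lambda$, as well as the full estimate in the regime $|x-y|\le 1$. The only genuinely new piece to establish is a $t$-dependent a priori bound $W_p(\delta_x P_t,\delta_y P_t)\le C/(t\wedge 1)$ that is uniform in $x,y\in\R^d$.

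To produce such a smoothing estimate I would reuse the reflection coupling $(X_t,Y_t)$ from the proof of Theorem \ref{main-result}. Setting $r_t:=|\sigma^{-1}(X_t-Y_t)|$, up to the coupling time one has $\d r_t\le 2\,\d W_t+2\kappa(r_t)\,\d t$, with $\kappa(r_t)\le -cr_t^\theta$ on $\{r_t\ge\eta\}$. Applying It\^o's formula to $V(r):=r^p$ leads to
\begin{equation*}
 \frac{\d}{\d t}\E[r_t^p]\le -2pc\,\E\bigl[r_t^{p-1+\theta}\ch_{r_t\ge\eta}\bigr]+C_1,
\end{equation*}
where $C_1$ absorbs the It\^o correction $2p(p-1)\E[r_t^{p-2}]$ and the contribution on $\{r_t<\eta\}$. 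The elementary pointwise inequality $r_t^{p-1+\theta}\ch_{r_t\ge\eta}\ge r_t^{p-1+\theta}-\eta^{p-1+\theta}$ combined with Jensen's inequality gives the super-linear differential inequality $\phi'(t)\le -C_2\,\phi(t)^{1+(\theta-1)/p}+C_3$ for $\phi(t):=\E[r_t^p]$. An Osgood--Bihari type argument then furnishes a \emph{universal} bound $\phi(t)\le C_4\,t^{-p/(\theta-1)}$ that is independent of the initial distance $|x-y|$, so that $W_p(\delta_x P_t,\delta_y P_t)\le \phi(t)^{1/p}\le C_5\,t^{-1/(\theta-1)}$.

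The two estimates are then glued via the semigroup property. For $t\ge 1$, I would write $P_t=P_{t/2}P_{t/2}$ and combine the smoothing on $[0,t/2]$ with Theorem \ref{main-result} applied on $[t/2,t]$ to obtain $W_p(\delta_x P_t,\delta_y P_t)\le Ce^{-\lambda t/p}$, which matches $1/(t\wedge 1)=1$ in this regime. For $t<1$ the smoothing handles the range $|x-y|>1/t$ whereas Theorem \ref{main-result} suffices for $|x-y|\le 1/t$; taking the minimum of the two outputs produces the bound \eqref{thm-theta-2}.

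The main obstacle I foresee is the Osgood--Bihari step with the additive perturbation $C_3$: one must verify that the super-linear dissipative term continues to dominate and produces a bound $\phi(t)\le O(t^{-p/(\theta-1)})$ that is independent of $\phi(0)$, which requires a careful trajectory-wise analysis near the equilibrium value $(C_3/C_2)^{p/(p+\theta-1)}$. A secondary concern is to reconcile the rate $t^{-1/(\theta-1)}$ with the claimed $1/(t\wedge 1)$ when $\theta\in(1,2)$ and $t<1$; this likely requires sharpening the auxiliary function $V$ or absorbing the discrepancy into the exponential factor $e^{-\lambda t/p}$ via an intermediate-time application of Theorem \ref{main-result} before comparing with the stated rate.
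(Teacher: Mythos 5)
Your first step coincides with the paper's: since $cr^\theta\ge c\eta^{\theta-1}r$ for $r\ge\eta$, condition \eqref{thm-theta-1} implies \eqref{main-result-1} with $c\eta^{\theta-1}$, and Theorem \ref{main-result} gives the $|x-y|$-branch of \eqref{thm-theta-2}. For the uniform-in-$(x,y)$ estimate, however, the paper takes a different and technically lighter route than your moment/Bihari argument: it runs the \emph{synchronous} coupling as long as $r_t=|\sigma^{-1}(X_t-Y_t)|>\eta$, so that $\d r_t\le 2\kappa(r_t)\,\d t\le -2c\,r_t^\theta\,\d t$ holds pathwise with no It\^o correction; since $\theta>1$, the hitting time $T_\eta$ of the level $\eta$ obeys the deterministic bound $T_\eta\le t_0:=\eta^{1-\theta}/(2c(\theta-1))$ uniformly in the initial distance (pathwise ``coming down from infinity''). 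It then switches to the reflection coupling at time $T_\eta$ and uses the strong Markov property together with the estimate \eqref{small} (valid for initial distance at most $\eta$) to get $W_p(\delta_xP_t,\delta_yP_t)\le C e^{-\lambda t}$ for all $t>t_0$, uniformly in $x,y$; no gluing of Wasserstein bounds over non-Dirac intermediate laws is needed. Your route can be made to work as an alternative, but note that the It\^o correction $2p(p-1)\E[r_t^{p-2}]$ is singular at $r=0$ when $p\in(1,2)$, and you must also control $\E\big[r_t^{p-1}\kappa^+(r_t)\ch_{\{r_t<\eta\}}\big]$; the clean fix is to run the moment argument only for $p\ge 2$ and use $W_p\le W_{p\vee 2}$ for the uniform bound, the case $|x-y|\le1$ being already covered by Theorem \ref{main-result}.

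The more substantive point concerns your ``secondary concern'', which should not be resolved in the way you suggest. For $\theta\in(1,2)$ the hypothesis $\kappa(r)\le -cr^\theta$ only yields the short-time uniform rate $t^{-1/(\theta-1)}$ (your Bihari bound, and likewise what the paper's ODE comparison gives as $|x-y|\to\infty$), and this cannot be upgraded to $1/t$ by sharpening the auxiliary function or by interpolating with the exponential factor: a drift with $\langle b(x)-b(y),x-y\rangle\asymp -|x-y|^{1+\theta}$ keeps two solutions started astronomically far apart at distance of order $t^{-1/(\theta-1)}\gg 1/t$ at small times. The paper does not attempt any short-time smoothing estimate: the uniform bound is invoked only for $t>t_0$, a fixed constant, where the factor $1/(t\wedge1)$ is comparable to a constant, while for $t\le t_0$ only the $|x-y|$-branch \eqref{proof-theta-1} is used. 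You should organize your proof the same way; in the corner regime $t<t_0\wedge 1$ and $|x-y|>1/t$ with $\theta\in(1,2)$, neither your argument nor the paper's delivers the literal $1/t$ bound, so do not spend effort trying to extract it from the coupling.
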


The idea of the proof is to use synchronous coupling for large $|x-y|$ and the coupling by reflection for small $|x-y|$. For the latter part, we can directly use the result of Theorem \ref{main-result}, since \eqref{thm-theta-1} implies that \eqref{main-result-1} holds with $c\eta^{\theta-1}$ if $\eta>0$.

Before presenting the consequences of Theorem \ref{main-result}, let
us first make some comments and give some examples. In the beginning, we intended to generalize Eberle's results by
assuming that \emph{there is a constant $c>0$ such that}
  \begin{equation}\label{1-rem-1}
\kappa(r)\leq -c \quad \it{for\,all }\,r\, \it{ large\, enough} .
  \end{equation}
\noindent It turns out that under mild conditions on $\kappa$, the two conditions \eqref{diss-2} and
\eqref{1-rem-1} are equivalent, up to changing the constants. More explicitly, we have

\begin{proposition}\label{2-prop-1}
Assume that there are constants $c,r_0>0$ such that $\kappa(r_0)\leq -c$ and $\delta_0:=\sup_{0\leq r\leq r_0}\kappa(r)<+\infty$. Then, the condition \eqref{diss-2} holds with some other positive constant.
\end{proposition}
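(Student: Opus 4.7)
The plan is to bootstrap the single-point estimate $\kappa(r_0)\leq -c$ into the linear bound of \eqref{diss-2} by a telescoping argument along line segments: if along every segment of $\sigma^{-1}$-length exactly $r_0$ the drift contracts at rate $c$, then concatenating $n\approx r/r_0$ such segments should gain a contraction proportional to $n$, which is linear in $r$. The role of the hypothesis $\delta_0<\infty$ is to control the one short leftover segment at the end.

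More concretely, given any $x,y\in\R^d$ with $|\sigma^{-1}(x-y)|=r>r_0$, I would set $n=\lfloor r/r_0\rfloor$, $s=r-nr_0\in[0,r_0)$, and interpolate along the straight line from $x$ to $y$ by points $z_0=x,z_1,\dots,z_n,z_{n+1}=y$ chosen so that $|\sigma^{-1}(z_i-z_{i-1})|=r_0$ for $i=1,\dots,n$ and $|\sigma^{-1}(z_{n+1}-z_n)|=s$. Since all the vectors $\sigma^{-1}(z_i-z_{i-1})$ are positive scalar multiples of $\sigma^{-1}(y-x)$, applying the definition of $\kappa$ to each consecutive pair and rescaling gives
$$\langle\sigma^{-1}(y-x),\sigma^{-1}(b(z_i)-b(z_{i-1}))\rangle\leq 2r\,\kappa(|\sigma^{-1}(z_i-z_{i-1})|).$$
Telescoping $b(y)-b(x)=\sum_i(b(z_i)-b(z_{i-1}))$ and invoking $\kappa(r_0)\leq -c$ on the $n$ main pieces together with $\kappa(s)\leq \delta_0$ on the residual one, then dividing by $2r$ and taking the supremum over admissible $(x,y)$, I get $\kappa(r)\leq -c\lfloor r/r_0\rfloor+\delta_0$. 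Once $r\geq 2r_0$ this is at most $\delta_0-cr/(2r_0)$, and for $r$ also exceeding $4r_0\delta_0^+/c$ it is bounded by $-cr/(4r_0)$, which is \eqref{diss-2} with constant $c'=c/(4r_0)$.

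The one step that needs a moment of care is the residual segment: a priori the factor $r/s$ introduced when rescaling $\sigma^{-1}(z_{n+1}-z_n)$ to $\sigma^{-1}(y-x)$ threatens to blow up as $s\to 0$, but it is exactly cancelled by the factor $s$ inside $2s\,\kappa(s)$, so the residual contributes only the additive constant $\delta_0$ to the final bound on $\kappa(r)$ rather than something proportional to $r$ or worse. With this cancellation in hand, the remainder of the argument is routine arithmetic; there are no analytic subtleties beyond the one-variable interpolation.
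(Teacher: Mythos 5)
Your proof is correct and follows essentially the same route as the paper: interpolating points along the line segment from $x$ to $y$ with $\lfloor r/r_0\rfloor$ pieces of $\sigma^{-1}$-length exactly $r_0$ plus a short residual, telescoping the drift differences (using that all pieces are parallel to $x-y$), bounding the main pieces by $\kappa(r_0)\le -c$ and the residual by $\delta_0$, and finishing with the same elementary arithmetic to get a linear bound $\kappa(r)\le -c'r$ for large $r$. Your explicit remark about the cancellation of the $r/s$ rescaling factor against $s$ in $2s\,\kappa(s)$ is a nice touch that the paper handles implicitly, but there is no substantive difference in method.
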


This result indicates that if the function $\kappa$ is locally
bounded from above, then the following statements are equivalent:
\begin{itemize}
\item[(i)] there exist constants $c,r_0>0$ such that $\kappa(r_0)\leq -c$;
\item[(ii)] there exist constants $c>0$ and $\theta\le 1$ such that $\kappa(r)\le -cr^{\theta}$ for $r>0$ large enough;
\item[(iii)] there exists a constant $c>0$ such that $\kappa(r)\le -cr$ for  $r>0$ large enough.
\end{itemize}
Note that, none of the above condition is equivalent to that there exist constants $c>0$ and $\theta>1$
such that for $r>0$ large enough, $\kappa(r)\le -cr^{\theta}$, see e.g.\ $b(x)=-x$ for all $x\in\R^d$. Compared to \eqref{diss-2}, the seemingly much weaker condition (i), i.e.\ \emph{there exist two constants $c,r_0>0$ such that $\kappa(r_0)\le -c$}, has the obvious advantage of being easily verifiable. Thus we shall sometimes
use this formulation in the sequel.

The equivalence stated above also indicates that Theorem
\ref{main-result} is sharp in some situation, as shown by the next
example.

\begin{example}\label{1-example-1}
Assume that $\sigma=\Id$ and $b(x)=\nabla V(x)$ with
$V(x)=-(1+|x|^2)^{\delta/2}$ for some $\delta\in(0,2)$. Then, we
have the following statements.
\begin{itemize}
\item[$(1)$] If $\delta\in(0,1)$, then $\kappa(r)\geq 0$ for all $r$ large
enough, and the inequality \eqref{main-result-2} does not hold for
any positive constants $C$ and $\lambda$ with $p=1$.
\item[$(2)$] If $\delta\in[1,2)$, then $\kappa(r)=0$ for all $r\geq
0$, and so for all $x,$ $y\in\R^d$ and $t>0$,
  \begin{equation}\label{1-example-1.1}
  W_1(\delta_x P_t,\delta_y P_t)\leq |x-y|.
  \end{equation}
%Although we do not obtain a rate of decay with respect to the $L^1$-Wasserstein distance as time elapses,
%it is easy to show that
On the other hand, it holds that
  \begin{equation}\label{1-example-1.2}
  d_{TV}(\delta_x P_t,\delta_y P_t)\leq \sqrt{\frac{2}{\pi t}}\, |x-y|
  \quad \mbox{for all }t\geq 0 \mbox{ and } x,y\in\R^d,
  \end{equation}
where $d_{TV}$ is the total variation distance between probability measures.
\end{itemize}
\end{example}

To show the power of Theorem \ref{thm-theta}, we consider the
following example which yields the exponential convergence of the
semigroup $(P_t)_{t\ge0}$ with respect to the $L^p$-Wasserstein
distance $W_p$ $(p>2)$ for super-convex potentials. The assertion
below improves the results mentioned in \cite[Section 6.1]{CG}.

\begin{example}\label{1-example-2} \rm
\emph{Let $\sigma=\Id$ and $b(x)=\nabla V(x)$ with $V(x)=-|x|^{2\alpha}$ and $\alpha>1$.
It follows from} \cite[Section 6, Example 1]{CG} \emph{that there
is a constant $c>0$ such that for all $r>0$,
  \begin{equation}\label{1-example-2.1}
  \kappa(r)\le -c r^{2\alpha-1}.
  \end{equation}
%Thus the condition \eqref{main-result-1} holds when $r\geq 1$ with the same constant $c$.
Then, according to Theorem $\ref{thm-theta}$, the associated semigroup $(P_t)_{t\ge0}$
converges exponentially with respect to the $L^p$-Wasserstein distance $W_p$ for any $p\geq 1$.  More explicitly, there is a constant $\lambda:=\lambda(\alpha)>0$ such that for any $p\ge1$,
$x$, $y\in\R^d$ and $t>0$,
  $$W_{p}(\delta_x P_t,\delta_y P_t)\leq Ce^{-\lambda t}\left[|x-y|^{1/p}{\bf 1}_{\{|x-y|\le 1\}}
  + \Big(|x-y| \wedge \frac{1}{t\wedge1}\Big){\bf 1}_{\{|x-y|\ge 1\}}\right] $$ holds with some constant $C:=C(\alpha,p)>0$.}

\emph{Note that \eqref{1-example-2.1} implies that  for all $x$, $y\in\R^d$,
  $$\langle b(x)-b(y),x-y\rangle\le -2c |x-y|^{2\alpha}.$$ Therefore,
the uniformly dissipative condition \eqref{diss-1} fails when $x,y\in\R^d$ are sufficiently close to each other.
That is, one cannot deduce directly from Theorem $\ref{theorem1}$ the exponential
convergence with respect to the $L^p$-Wasserstein distance $W_p\ (p\geq 1)$.}
\end{example}

As applications of Theorem \ref{main-result}, we consider the
existence and uniqueness of the invariant probability measure, and
also the exponential convergence of the semigroup with respect to
the $L^p$-Wasserstein distance $W_p$.  For $p\in(1,\infty)$, we
define
  $$\phi_p(r)=\begin{cases}
  r^{1/p}, & \mbox{if } r< p^{-p/(p-1)};\\
  r-p^{-p/(p-1)}+p^{-1/(p-1)}, & \mbox{if } r\ge p^{-p/(p-1)}.
  \end{cases}$$
Finally, let $\phi_1(r)=r$ for all $r\geq 0$. Then for all
$p\in[1,\infty)$, $\phi_p$ is a concave $C^1$-function on $\R_+$,
thus $W_{\phi_p}$ is a well defined distance on $\mathcal
P_p(\R^d)$. Moreover,
  $$r\vee r^{1/p}\leq \phi_p(r)\leq r+r^{1/p}\le 2 (r\vee r^{1/p}) \quad \mbox{for all } r\geq 0,\, p\in[1,\infty).$$

\begin{corollary}\label{invariant}
Suppose that the drift term $b$ is locally bounded on $\R^d$, and
\eqref{main-result-1} holds for all $r>0$ large enough with some
constant $c>0$. Then, there exists a unique
invariant probability measure $\mu\in \cap_{p\geq 1}\mathcal{P}_p(\R^d)$,
such that there is a constant $\lambda:=\lambda(c)>0$ such that for all $p\in[1,\infty)$ and for any probability measure
$\nu\in \mathcal P_p(\R^d)$,
  $$W_p(\nu P_t,\mu)\leq Ce^{-\lambda t}W_{\phi_p}(\nu, \mu),\quad t\geq 0$$
holds with some positive constant $C:=C(c,p)$.
\end{corollary}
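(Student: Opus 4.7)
The plan is to combine Theorem~\ref{main-result} with uniform-in-time moment bounds and a Cauchy/contraction argument. The moments give the invariant measure the integrability demanded by the statement, while Theorem~\ref{main-result} supplies the quasi-contraction once it is rephrased in terms of $\phi_p$.

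First, from $\kappa(r)\le -cr$ for $r$ large and the local boundedness of $b$, I would extract (by setting $y=0$ in the definition of $\kappa$) a one-point dissipativity estimate of the form $\langle b(x),x\rangle\le -c'|x|^2+K$. Applying Itô's formula to the Lyapunov function $V_q(x)=(1+|x|^2)^{q/2}$ then yields $\mathcal L V_q\le K_q-\delta_q V_q$, and Grönwall's lemma gives $\sup_{t\ge 0}\E_x|X_t|^q<\infty$ for every $q\ge 1$ and every $x\in\R^d$. This uniform moment control is exactly what forces any invariant measure into $\cap_{p\ge 1}\mathcal P_p(\R^d)$.

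Next, I would rewrite Theorem~\ref{main-result} in terms of $\phi_p$. Since $\phi_p(r)\asymp r^{1/p}$ on $[0,1]$ and $\phi_p(r)\asymp r$ on $[1,\infty)$, the estimate \eqref{main-result-2} takes the form
\[
W_p(\delta_xP_t,\delta_yP_t)\le C\,e^{-\lambda t/p}\,\phi_p(|x-y|),\qquad x,y\in\R^d.
\]
Lifting to arbitrary initial measures proceeds via the gluing inequality $W_p(\mu P_t,\nu P_t)^p\le\int W_p(\delta_xP_t,\delta_yP_t)^p\,d\Pi(x,y)$ applied to an optimal $W_{\phi_p}$-coupling $\Pi$ of $\mu,\nu$. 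On the near region $\{|x-y|\le 1\}$ one has $\phi_p(r)^p=r\le\phi_p(r)$, so the contribution is directly controlled by $W_{\phi_p}(\mu,\nu)$; on the far region $\{|x-y|>1\}$ I would prepend a short synchronous coupling, in the spirit of Theorem~\ref{thm-theta}, to drive $|X_t-Y_t|$ into the near regime, using the moment bounds of the first step to absorb the resulting constants. This produces the contraction $W_p(\mu P_t,\nu P_t)\le Ce^{-\lambda t}\,W_{\phi_p}(\mu,\nu)$.

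With contraction in hand, a standard Cauchy argument produces the invariant measure: for fixed $x$ and any $s,t\ge 0$,
\[
W_p(\delta_xP_{t+s},\delta_xP_t)\le Ce^{-\lambda t}\,W_{\phi_p}(\delta_xP_s,\delta_x),
\]
whose right-hand side is uniformly bounded in $s$ by the moment estimates and exponentially small in $t$. Hence $(\delta_xP_t)_{t\ge 0}$ is Cauchy in $W_p$; its limit $\mu$ is independent of $x$ (by contraction), invariant (by continuity of $P_t$ in $W_p$), lies in every $\mathcal P_p(\R^d)$ (by the moment bounds), and is unique (again by contraction). Specializing the contraction to $\nu$ and $\mu$ gives the stated estimate. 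The main obstacle is the lifting step: $\phi_p^p$ is convex rather than concave, so $\int\phi_p^p\,d\Pi$ cannot be dominated by $W_{\phi_p}(\mu,\nu)$ by Jensen alone, and the near/far regime split together with the synchronous-coupling trick of Theorem~\ref{thm-theta} is what rescues the argument.
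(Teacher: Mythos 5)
Your overall architecture --- lift Theorem \ref{main-result} to a measure-level quasi-contraction, run a fixed-point/Cauchy argument for existence and uniqueness, and get the integrability of $\mu$ from Lyapunov moments --- is the same as the paper's, and the differences (an explicit Lyapunov computation instead of the paper's citation of \cite[Theorem 4.3(ii)]{MT93}, a Cauchy sequence instead of the Banach fixed point theorem, which the paper applies on $(\mathcal P_1(\R^d),W_1)$ precisely because for $p=1$ the lifting is immediate from glueing since $\phi_1(r)=r$) are harmless. The genuine gap sits exactly at the step you yourself flag as the ``main obstacle,'' and your proposed rescue does not close it. The synchronous-coupling trick of Theorem \ref{thm-theta} relies on \eqref{thm-theta-1} with $\theta>1$: only because $\int^\infty r^{-\theta}\,\d r<\infty$ does the entrance time of the near region admit the uniform bound $t_0$ of \eqref{rrr2}. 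Under the present hypothesis $\kappa(r)\le -cr$ (the case $\theta=1$), the synchronous coupling gives only $r_t\le r_0e^{-2ct}$ before level $\eta$ is reached, so the entrance time is of order $\log r_0$ and is unbounded in the initial distance; there is no ``short'' synchronous phase to prepend.

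More seriously, what your far-region treatment would need is a pointwise estimate of the form $\E|X_t-Y_t|^p\le Ce^{-\lambda t}|x-y|$ for $|x-y|>1$ (linear in the distance, so that the far contribution to $\int\phi_p(|x-y|)^p\,\d\Pi$ is dominated by $\int\phi_p(|x-y|)\,\d\Pi$), and no coupling can deliver this: already as $t\downarrow0$ the left-hand side is of order $|x-y|^p$. Nor can the moment bounds ``absorb the constants,'' because the difficulty is not integrability but the gap between $\int\phi_p^p\,\d\Pi$ and $\bigl(\int\phi_p\,\d\Pi\bigr)^p$: for $\nu=(1-\eps)\mu+\eps\,\delta_{Re_1}$, which has all moments finite, any coupling with $\mu$ gives $\int\phi_p(|x-y|)^p\,\d\Pi$ of order $\eps R^p$ while $W_{\phi_p}(\nu,\mu)$ is of order $\eps R$, and the ratio $R^{p-1}$ is unbounded, uniformly in small $\eps$. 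So for $p>1$ the measure-level bound cannot be extracted from the Dirac estimate of Theorem \ref{main-result} by glueing plus a synchronous phase; a further idea is required. Note that your Cauchy step itself is salvageable, since there one of the two measures is a Dirac mass and glueing plus moments bound $\int\phi_p(|z-x|)^p\,\delta_xP_s(\d z)$ directly; what remains open in your plan is exactly the final inequality against $W_{\phi_p}(\nu,\mu)$ for general $\nu$ and $p>1$. For comparison, the paper's proof carries out the lifting only for $p=1$ (to run the fixed point argument in $W_1$), cites Meyn--Tweedie for $\mu\in\cap_{p\ge1}\mathcal P_p(\R^d)$, and asserts the $p>1$ measure-level inequality directly from Theorem \ref{main-result} without supplying the argument you attempt --- so you have located the step the paper leaves implicit, but your proposal as written does not fill it.
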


\begin{remark}\label{remark-ref}\rm \begin{itemize}\item[(1)] Under the assumptions of Corollary \ref{invariant}, it is easy to establish the following Foster-Lyapunov type conditions:
$$L \phi(x)\le - c_1\phi(x)+c_2,\quad x\in \R^d,$$ where $L$ is the
generator of the underlying diffusion process, $\phi(x)=|x|^2$ and
$c_1,c_2$ are two positive constants. Due to the existence and
uniqueness of the invariant probability measure, we know that the
diffusion process exponentially converges to the unique invariant
probability measure $\mu$ with respect to the total variation
distance. That is, there are a constant $\theta>0$ and a measurable
function $C(x)$ such that for all $x\in \R^d$ and $t>0$,
$$  d_{TV}(P(t,\cdot),\pi)\le C(x)e^{-\theta t},$$ where $P(t,\cdot)$ is the associated transition probability.

\item[(2)] As was pointed by the referee, the conclusion of
Corollary \ref{invariant} for $p=1$ also could be deduced from
Theorem \ref{thm-theta} and the Foster-Lyapunov type condition
above, by Harris's theorem for the exponential convergence to the
invariant measure in the Wasserstein metric. See \cite[Theorem
4.8]{HM2} and \cite[Theorem 2.4]{Bu} for more details.
\end{itemize}
\end{remark}

The following statement is concerned with symmetric diffusion
processes. Though we believe the assertion below is known (see e.g.
\cite[Corollary 1.4]{Wang99}), we stress the relation between the
exponential convergence with respect to $L^1$-Wasserstein distance
$W_1$ and that with respect to the $L^2$-norm, which is equivalent
to the Poincar\'{e} inequality.

\begin{corollary}\label{poincare}
Let $U$ be a $C^2$-potential defined on $\R^d$ such that its Hessian matrix $\textup{Hess}(U)\geq -K$
for some $K>0$. Assume that $\mu(\d x)=e^{-U(x)}\,\d x$ is a probability measure on $\R^d$. If there
exists a constant $L>0$ such that
  \begin{equation}\label{poincare-1}
  \inf_{|x-y|=L}\big\langle \nabla U(x)-\nabla U(y), x-y\big\rangle>0,
  \end{equation}
then $\mu$ satisfies the Poincar\'{e} inequality, i.e.
  \begin{equation}\label{poincare-2}
  \mu(f^2)-\mu(f)^2\le C\int |\nabla f(x)|^2\,\d \mu,\quad f\in C_c^2(\R^d)
  \end{equation}
holds for some constant $C>0$.
\end{corollary}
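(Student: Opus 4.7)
The plan is to view $\mu$ as the reversible invariant measure of the diffusion $\d X_t=\d B_t-\frac{1}{2}\nabla U(X_t)\,\d t$, verify that its drift $b=-\frac{1}{2}\nabla U$ fits the framework of Corollary \ref{invariant}, and then combine the resulting Foster--Lyapunov structure with a local Poincar\'e estimate to obtain \eqref{poincare-2} via a classical spectral-gap criterion.

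With $\sigma=\Id$ the function $\kappa$ from \eqref{kappa} satisfies $4r\,\kappa(r)=-\inf_{|x-y|=r}\langle\nabla U(x)-\nabla U(y),x-y\rangle$. The Hessian lower bound $\textup{Hess}(U)\ge -K$ gives $\kappa(r)\le Kr/4$, so $\kappa$ is locally bounded from above, while \eqref{poincare-1} says $\kappa(L)<0$. Proposition \ref{2-prop-1} therefore produces constants $c,R>0$ with $\kappa(r)\le -cr$ for $r\ge R$, which is the hypothesis of Corollary \ref{invariant}. Since $\mu$ is already a reversible (hence invariant) probability measure, uniqueness in Corollary \ref{invariant} identifies it as \emph{the} unique invariant measure; moreover $\mu$ has finite moments of all orders, and Remark \ref{remark-ref}(1) gives the Foster--Lyapunov inequality $L\phi\le -c_1\phi+c_2$ with $\phi(x)=|x|^2$.

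To convert this structure into \eqref{poincare-2}, observe that $L=\frac{1}{2}(\Delta-\nabla U\cdot\nabla)$ is self-adjoint on $L^2(\mu)$, so \eqref{poincare-2} is equivalent to an $L^2(\mu)$-spectral gap of $-L$. Because $U\in C^2(\R^d)$, the density $e^{-U}$ is bounded above and below on every bounded ball, so the classical Poincar\'e inequality on a Euclidean ball transfers to a local Poincar\'e inequality for $\mu$. Combining this local estimate with a suitable Lyapunov function---a natural candidate is a smooth modification of $V(x)=e^{\eps|x|}$, for which $|\nabla V|^2/V^2$ stays bounded while $-LV/V\to\infty$ at infinity because $\kappa(r)\le -cr$ forces $\langle x,\nabla U(x)\rangle\ge c'|x|^2$ for $|x|$ large---the Lyapunov-type criterion of \cite{Wang99} (equivalently, the Bakry--Barthe--Cattiaux--Guillin scheme) yields \eqref{poincare-2}. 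The main obstacle is precisely this last growth verification: one must translate the pointwise dissipativity $\kappa(r)\le -cr$ (a statement about increments of $\nabla U$) into the lower bound $\langle x,\nabla U(x)\rangle\ge c'|x|^2$ for $|x|$ large, which is accomplished by taking $y=0$ in the infimum defining $\kappa$ and using $\textup{Hess}(U)\ge -K$ to absorb the residual $\nabla U(0)$ term.
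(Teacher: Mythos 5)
Your argument is essentially correct, but it follows a genuinely different route from the paper. You reduce everything to the classical ``Lyapunov function plus local Poincar\'e'' criterion: after checking (correctly) that $\kappa(r)\le Kr/4$, that \eqref{poincare-1} gives $\kappa(L)<0$, and that Proposition \ref{2-prop-1} then yields $\kappa(r)\le -cr$ at infinity, you convert this into the drift growth $\langle x,\nabla U(x)\rangle\ge c'|x|^2$ for large $|x|$ (taking $y=0$ in the dissipativity bound; note that Cauchy--Schwarz alone absorbs the $\nabla U(0)$ term, the Hessian bound is not needed there), build a Lyapunov function such as a smoothed $e^{\eps|x|}$ (or simply $1+|x|^2$, which Remark \ref{remark-ref}(1) already suggests), and invoke the known criterion of \cite{Wang99} or the Bakry--Barthe--Cattiaux--Guillin scheme together with the local Poincar\'e inequality on balls. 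This is a valid proof, but it bypasses the paper's main theorem entirely and rests on an external spectral-gap criterion; the detour through Corollary \ref{invariant} and the uniqueness of the invariant measure is also unnecessary for \eqref{poincare-2}. The paper's own proof is designed to exhibit the link between $W_1$-contraction and the spectral gap: it applies Proposition \ref{2-prop-1} and Theorem \ref{main-result} to get $W_1(\delta_xP_t,\delta_yP_t)\le Ce^{-\lambda t}|x-y|$, translates this by Kantorovich duality into the Lipschitz decay $\|P_tf\|_{\textrm{Lip}}\le Ce^{-\lambda t}\|f\|_{\textrm{Lip}}$, integrates the identity $\textup{Var}_\mu(f)=\int_0^\infty\int|\nabla P_tf|^2\,\d\mu\,\d t$ to obtain $\textup{Var}_\mu(P_tf)\le \frac{C^4e^{-2\lambda t}}{2\lambda}\|f\|_{\textrm{Lip}}^2$, and then upgrades this to $\|P_tf\|_{L^2(\mu)}^2\le e^{-2\lambda t}$ for centered $f$ via the spectral representation argument of \cite{R-Wang01}, which is equivalent to \eqref{poincare-2}. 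What your approach buys is a shorter, more elementary argument relying on standard Lyapunov machinery; what the paper's approach buys is a self-contained derivation from its Wasserstein contraction result (which is also the mechanism quoted later in the proof of Example \ref{1-example-1}(1)), with an explicit constant coming from $C$ and $\lambda$ of Theorem \ref{main-result}.
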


\section{Preliminaries}\label{section}
\subsection{Coupling by Reflection}
Similar to the main result in \cite{Eberle}, the proof of Theorem
\ref{main-result} is based on the reflection coupling of Brownian
motion, which was introduced by Lindvall and Rogers \cite{LR} and
developed by Chen and Li \cite{CL}. First, we give a brief
introduction of the coupling by reflection. Together with
\eqref{Ito-SDE}, we also consider
  \begin{equation}\label{coupling}
  \d Y_t=\sigma(\Id-2e_te_t^\ast)\,\d B_t+b(Y_t)\,\d t, \quad t<T,
  \end{equation}
where $\Id\in\R^{d\times d}$ is the identity matrix,
  $$e_t=\frac{\sigma^{-1}(X_t-Y_t)}{|\sigma^{-1}(X_t-Y_t)|}$$
and $T=\inf\{t>0:X_t=Y_t\}$ is the coupling time. For $t\geq T$, we
shall set $Y_t=X_t$. Then, the process $(X_t,Y_t)_{t\ge0}$ is called
the coupling by reflection of $(X_t)_{t\ge0}$. Under our assumptions, the refection coupling
$(X_t,Y_t)_{t\ge0}$ can be realized as a non-explosive diffusion
process in $\R^{2d}$. The difference process
$(Z_t)_{t\ge0}=(X_t-Y_t)_{t\ge0}$ satisfies
  \begin{equation}\label{difference}
  \d Z_t=\frac{2Z_t}{|\sigma^{-1}Z_t|}\,\d W_t+(b(X_t)-b(Y_t))\,\d t,\quad t<T,
  \end{equation}
where $(W_t)_{0\leq t<T}$ is a one dimensional Brownian motion expressed
by $W_t=\int_0^t \<e_s,\d B_s\>$.

Next, by It\^o's formula and \eqref{difference}, for $t<T$,
  \begin{align*}
  \d \big(|\sigma^{-1}Z_t|^2\big)&=2\<\sigma^{-1}Z_t,\sigma^{-1}\d Z_t\>+\<\sigma^{-1}\d Z_t,\sigma^{-1}\d Z_t\>\\
  &=4|\sigma^{-1}Z_t|\,\d W_t+2\big\<\sigma^{-1}Z_t,\sigma^{-1}(b(X_t)-b(Y_t))\big\>\,\d t
  +4\,\d t.
  \end{align*}
Denote by $r_t=|\sigma^{-1}Z_t|$. Then
  \begin{equation}\label{difference.1}
  \begin{split}
  \d r_t&=\frac{1}{2r_t}\,\d r_t^2-\frac1{8r_t^3}\,\d r_t^2\cdot\d r_t^2\\
  &=2\,\d W_t+\frac1{r_t}\big\<\sigma^{-1}Z_t,\sigma^{-1}(b(X_t)-b(Y_t))\big\>\,\d t.
  \end{split}
  \end{equation}

  \subsection{Auxiliary Function}
  For any $\varepsilon\in(0,c)$, let $\psi\in C^2([0,\infty))$ be the
following strictly increasing function
 \begin{equation}\label{af}\psi(r)=\int_0^r \exp\left({-\int_0^s \kappa^*(v)\,\d v}\right)
  \bigg\{\int_s^\infty \exp\left({\int_0^u\big[\kappa^*(v)+\varepsilon v\big]\,\d v}\right)\,\d u\bigg\}\, \d s,\end{equation}
where
  $$\kappa^*(r)=\begin{cases}
  \kappa^+(r), & \mbox{if } 0<r\le \eta;\\
  -cr, & \mbox{if } r>\eta.
  \end{cases}$$

\ \

\begin{remark}\label{rem-psi} \rm
The definition of $\psi$ seems a little bit strange at first sight, indeed, it is motivated by Chen--Wang's variational
formula for principal eigenvalue of one-dimensional diffusion operator
  $$Lf(r)=f''(r)+b(r)f'(r)$$
on $[0,\infty)$. One key to this famous formula is the following
elegant mimic eigenfunction $g$ associated with the first eigenvalue
$($see \cite[pp.\ 52--53]{Chen} for a heuristic argument$)$:
  $$g(r)=\int_0^r e^{-C(s)}\,\d s\int_s^\infty h(u) e^{C(u)}\,\d u,$$
where
  $$C(s)=\int_0^s b(u)\,\d u,\quad h(s)=\left(\int_0^r e^{-C(s)}\,\d s\right)^{1/2}.$$
Now, let $b(r)=-cr$ in the definition of the function $g$ above. Then we have
$$g(r)\asymp\psi(r)\quad\mathrm{ as  }\,\,r\to \infty$$ for some proper choice of the constant $\varepsilon$ in the definition of $\psi$.
\end{remark}

 \ \

On the one hand, it is easy to see that for any $r>0$,
  \begin{equation*}
  \begin{split}
  \psi'(r)=&\exp\left({-\int_0^r \kappa^*(v)\,\d v}\right)
  \int_r^\infty \exp\left(\int_0^u\big[\kappa^*(v)+\varepsilon v\big]\,\d v\right)\,\d u,\\
  \psi''(r)=&-\kappa^*(r)\psi'(r)- \exp\big(\eps r^2/2\big),
  \end{split}
  \end{equation*}
and so
  \begin{equation}\label{proof.4}
  \psi''(r)+\kappa(r)\psi'(r)\le - \exp\big(\eps r^2/2\big),\quad r\ge0.
  \end{equation}
On the other hand, for all $r>0$,
  $$\kappa^*(r)= [\kappa^+(r)+cr ]{\bf 1}_{[0,\eta]}(r)-cr.$$
Thus, for all $r>0$,
  \begin{equation}\label{r1}
  \begin{split}
  \psi(r)&=\int_0^r\exp\left(\frac{c}2 s^2-\int_0^{\eta\wedge s} [\kappa^+(v)+cv ]\,\d v\right)\\
  &\quad\times \bigg\{\int_s^\infty\exp\left(-\frac{c-\varepsilon}{2}u^2 +\int_0^{u\wedge \eta} [\kappa^+(v)+cv ]\,\d v\right)\,\d u\bigg\} \d s \\
  &\le \exp\bigg(\int_0^{\eta} [\kappa^+(v)+cv ]\,\d v \bigg)
  \int_0^r e^{cs^2/2} \bigg(\int_s^\infty e^{-(c-\eps)u^2/2}\, \d u\bigg) \d s.
  \end{split}
  \end{equation}

Define
  \begin{align*}
  \psi_1(r)&=\int_0^r e^{cs^2/2} \bigg(\int_s^\infty e^{-(c-\eps)u^2/2}\, \d u\bigg) \d s,\quad
  \psi_2(r)=\frac{e^{\eps r^2/2} -1}{r (1+r)}.
  \end{align*}
We first show that $\psi_1(r)$ and $\psi_2(r)$ are comparable.

\begin{lemma}\label{2-lem-1}
There exist two constants $C_0:=C_0(\eps)$ and $\hat C_0:=\hat C_0(\eps)>0$ such that for all $r\ge0$,
  $$\hat C_0\psi_2(r)\leq \psi_1(r)\leq C_0 \psi_2(r).$$
\end{lemma}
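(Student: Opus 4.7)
My plan is to establish the two-sided comparison by separate analysis in the regimes $0\le r\le 1$ and $r\ge 1$, exploiting the fact that, up to positive constants, both functions are linear near the origin and of order $e^{\varepsilon r^2/2}/r^2$ at infinity. A short Taylor computation shows $\psi_1(r)/r\to\sqrt{\pi/(2(c-\varepsilon))}$ and $\psi_2(r)/r\to \varepsilon/2$ as $r\to 0^+$, while the leading asymptotics at infinity are $\psi_1(r)\sim e^{\varepsilon r^2/2}/[\varepsilon(c-\varepsilon)r^2]$ and $\psi_2(r)\sim e^{\varepsilon r^2/2}/r^2$, so the ratio $\psi_1/\psi_2$ has positive finite limits at both ends; it remains to make these asymptotics quantitative and uniformly valid.

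The key technical ingredient is the two-sided Mills-ratio estimate for the Gaussian tail: for all $s\ge 0$,
$$\frac{s}{(c-\varepsilon)s^2+1}\,e^{-(c-\varepsilon)s^2/2}\;\le\;\int_s^\infty e^{-(c-\varepsilon)u^2/2}\,\d u\;\le\;\min\!\left\{\sqrt{\frac{\pi}{2(c-\varepsilon)}},\ \frac{e^{-(c-\varepsilon)s^2/2}}{(c-\varepsilon)s}\right\}\!.$$
Substituting the lower bound into $\psi_1$ gives $\psi_1(r)\ge \int_0^r s\,e^{\varepsilon s^2/2}/\bigl[(c-\varepsilon)s^2+1\bigr]\,\d s$; combining the inequality $(c-\varepsilon)s^2+1\le[(c-\varepsilon)+1]s^2$ valid on $[1,r]$ with the identity $\int_0^r s e^{\varepsilon s^2/2}\d s=(e^{\varepsilon r^2/2}-1)/\varepsilon$ bounds this below by a constant multiple of $\psi_2(r)$ when $r\ge 1$. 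For $r\le 1$ this estimate is too weak (only $O(r^2)$), so instead I use the crude lower bound $\int_s^\infty e^{-(c-\varepsilon)u^2/2}\d u\ge\int_1^\infty e^{-(c-\varepsilon)u^2/2}\d u$ for $s\in[0,1]$ together with $e^{cs^2/2}\ge 1$ to get $\psi_1(r)\ge c_1\,r$, and compare with the elementary upper bound $\psi_2(r)\le (\varepsilon/2)\,r\,e^{\varepsilon/2}$ coming from $e^x-1\le xe^x$.

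For the upper bound I split the outer integral at $s=1$. The contribution from $[0,1]$ is an $r$-independent constant (via the first half of the Mills bound), while on $[1,r]$ the second half reduces the task to controlling the elementary integral
$$\int_1^r \frac{e^{\varepsilon s^2/2}}{s}\,\d s.$$
Writing $e^{\varepsilon s^2/2}/s=(\varepsilon s^2)^{-1}\frac{\d}{\d s}e^{\varepsilon s^2/2}$ and integrating by parts, or equivalently substituting $t=\varepsilon s^2/2$ and invoking the exponential-integral tail estimate $\int_{t_0}^T e^t/t\,\d t\le 2e^T/T$ (valid for $T$ large enough), yields $\int_1^r e^{\varepsilon s^2/2}/s\,\d s\le C\,e^{\varepsilon r^2/2}/(\varepsilon r^2)$ plus an $r$-independent remainder. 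Since $\psi_2(r)\ge e^{\varepsilon r^2/2}/(4r^2)$ once $r$ is large enough (use $e^{\varepsilon r^2/2}-1\ge e^{\varepsilon r^2/2}/2$ for $r\ge\sqrt{2\log 2/\varepsilon}$, plus continuity and positivity on the bounded intermediate range), the inequality $\psi_1\le C_0\psi_2$ follows.

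The main obstacle is keeping explicit and sharp constants throughout, in particular in the estimate $\int_1^r e^{\varepsilon s^2/2}/s\,\d s\le C\,e^{\varepsilon r^2/2}/(\varepsilon r^2)$: a naive iterated integration by parts produces a geometric series in $2/(\varepsilon r^2)$, which converges only once $r\ge 2/\sqrt{\varepsilon}$, so one has to glue together small ($r\le 1$), intermediate ($1\le r\le 2/\sqrt{\varepsilon}$), and large ($r\ge 2/\sqrt{\varepsilon}$) regimes by explicit constants rather than by a single uniform argument. This bookkeeping, and the choice of the crossover radius, is presumably what produces the somewhat elaborate expression for $C_0(\varepsilon)$ appearing in Theorem~\ref{main-result}.
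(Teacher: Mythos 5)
Your proposal is correct, but it proves the lemma by a genuinely different (and more laborious) route than the paper does at this point. The paper's own proof of Lemma~\ref{2-lem-1} is purely soft: it computes $\lim_{r\to0}\psi_1(r)/\psi_2(r)=\frac{2}{\eps}\sqrt{\pi/(2(c-\eps))}$ and $\lim_{r\to\infty}\psi_1(r)/\psi_2(r)=\frac{1}{\eps(c-\eps)}$ by L'H\^opital, and then concludes from continuity and strict positivity of the ratio on $(0,\infty)$ that two-sided constants exist; no explicit constants are extracted here. The quantitative machinery you propose --- Mills-ratio/Gaussian-tail bounds, splitting the outer integral, integration by parts on $\int e^{\eps s^2/2}s^{-1}\,\d s$ with the crossover radius $2/\sqrt{\eps}$, and gluing small, intermediate and large regimes --- is essentially what the paper defers to Lemma~\ref{2-lem-2}, where only the upper constant $C_0$ is made explicit (with the tail bound $1-\Phi(r)\le 2\phi(r)/(\sqrt{2+r^2}+r)$ in place of the standard Mills ratio), because only $C_0$ enters the rate $\lambda$. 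So your argument effectively merges Lemma~\ref{2-lem-1} with Lemma~\ref{2-lem-2} and additionally quantifies $\hat C_0$, which the paper never needs explicitly; what you lose is brevity, and what you gain is explicit, $\eps$-dependent constants without appealing to a compactness/continuity step. One small point of care: your lower bound on the $[1,r]$ piece, $\int_1^r se^{\eps s^2/2}((c-\eps)s^2+1)^{-1}\,\d s\gtrsim (e^{\eps r^2/2}-e^{\eps/2})/(\eps r^2)$, degenerates as $r\downarrow1$ (the numerator vanishes while $\psi_2(1)>0$), so for $r$ in a bounded range above $1$ you must also invoke the $[0,1]$ contribution $\psi_1(r)\ge\psi_1(1)>0$ or the intermediate-regime gluing you already announce; with that bookkeeping carried out, the proof is complete.
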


\begin{proof}
Note that $r(1+r)\sim r$ as $r\to 0$ and $r(1+r)\sim r^2$ as $r\to \infty$, where $\sim$ means the two quantities are of the same order. By L'H\^opital's law,
  \begin{align*}
  \lim_{r\to 0} \frac{\psi_1(r)}{\psi_2(r)}&= \lim_{r\to 0} \frac{e^{cr^2/2} \int_r^\infty e^{-(c-\eps)u^2/2}\, \d u}{-r^{-2} \big(e^{\eps r^2/2}-1\big)+r^{-1}e^{\eps r^2/2} \eps r}\\
  &=\frac2\eps \int_0^\infty e^{-(c-\eps)u^2/2}\, \d u \\
  &=\frac2\eps \sqrt{\frac\pi{2(c-\eps)}}.
  \end{align*}
Next, using L'H\^opital's law twice,
  \begin{align*}
  \lim_{r\to \infty} \frac{\psi_1(r)}{\psi_2(r)}&=\lim_{r\to \infty} \frac{\psi_1(r)}{r^{-2} e^{\eps r^2/2}}\\
  &= \lim_{r\to \infty} \frac{e^{cr^2/2} \int_r^\infty e^{-(c-\eps)u^2/2}\, \d u} {e^{\eps r^2/2} [-2r^{-3}+\eps r^{-1}]}\\
  &=\lim_{r\to \infty} \frac{\int_r^\infty e^{-(c-\eps)u^2/2}\, \d u}{e^{-(c-\eps) r^2/2} r^{-2} [-2r^{-1}+\eps r]}\\
  &=-\lim_{r\to \infty} \frac{1}{h(r)},
  \end{align*}
where
  \begin{align*}
  h(r)&= -(c-\eps)r^{-1} (-2r^{-1}+\eps r)-2{r^{-3}} (-2r^{-1}+\eps r) +r^{-2} (2r^{-2}+\eps).
  \end{align*}
Thus,
  $$\lim_{r\to \infty} \frac{\psi_1(r)}{\psi_2(r)}=\frac1{\eps(c-\eps)}.$$
Therefore, the required assertion follows from the two limits above.
\end{proof}

By \eqref{r1} and Lemma \ref{2-lem-1}, we have
  \begin{equation*}
  \psi(r)\leq C_0\exp\bigg(\int_0^{\eta} [\kappa^+(v)+cv ]\,\d v \bigg) \frac{e^{\eps r^2/2} -1}{r (1+r)}
  \end{equation*}
and
  \begin{equation*}
  \psi(r)\geq \hat C_0 \exp\bigg(-\int_0^{\eta}\! \big[\kappa^+(v)+cv \big]\,\d v \bigg)\frac{e^{\eps r^2/2} -1}{r (1+r)}.
  \end{equation*}
Roughly speaking, the above two inequalities imply that the auxiliary function $\psi$ behaves like $c'r$ for small $r$, and grows exponentially fast as $e^{c''r^2}$ for large $r$. Hence, the function $\psi(r)$ can be used to control the function $r^p$ with $p\ge 1$. More explicitly, we have

\begin{corollary} There is a constant $C_1>1$ such that for all $r\ge0$,
  \begin{equation}\label{2-estimate-2-1}
  C_1^{-1}\big[r \vee (e^{\eps r^2/2}-1)\big]\le \psi(r)\le C_1\big[r \vee (e^{\eps r^2/2}-1)\big].
  \end{equation}
Consequently, for any $p\ge1$, there is a constant $C_2=C_2(p,\eps)>0$ such that for all $r\geq 0$,
   \begin{equation}\label{2-estimate-2} r^p\leq C_2 \psi(r).  \end{equation}
\end{corollary}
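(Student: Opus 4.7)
The plan is to read off \eqref{2-estimate-2-1} directly from the two displayed inequalities sandwiching $\psi(r)$ that immediately precede the statement of the corollary, namely
$$\hat C_0\, e^{-A}\,\psi_2(r)\le \psi(r)\le C_0\, e^{A}\,\psi_2(r),$$
where $A=\int_0^\eta\bigl[\kappa^+(v)+cv\bigr]\,\d v<\infty$ and $\psi_2(r)=(e^{\eps r^2/2}-1)/(r(1+r))$. The two prefactors are positive finite constants depending only on $\eps,c,\eta,\kappa$, so after absorbing them it suffices to show that $\psi_2(r)$ is comparable, up to multiplicative constants, to $r\vee(e^{\eps r^2/2}-1)$ on all of $[0,\infty)$.

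This comparison is a short two-region calculation. For $r\in[0,1]$, the elementary estimates $\tfrac{\eps}{2}r^2\le e^{\eps r^2/2}-1\le \tfrac{\eps}{2}e^{\eps/2}r^2$ together with $r\le r(1+r)\le 2r$ show that $\psi_2(r)\asymp r$, and on this range one also has $r\vee(e^{\eps r^2/2}-1)=r$. For $r\ge 1$, one has $r\le e^{\eps r^2/2}-1$, so $r\vee(e^{\eps r^2/2}-1)=e^{\eps r^2/2}-1$; a matching two-sided estimate for $\psi_2(r)$ on this range then follows by comparing the factor $1/(r(1+r))$ with the exponential numerator via $r(1+r)\le 2r^2$ on one side and the dominated-convergence/L'H\^opital control of $\psi_2/(e^{\eps r^2/2}-1)$ already implicit in Lemma \ref{2-lem-1} on the other. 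Gluing the two regions yields \eqref{2-estimate-2-1} with a single constant $C_1$.

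Having \eqref{2-estimate-2-1}, the consequence \eqref{2-estimate-2} is a quick case analysis. For $r\in[0,1]$ and $p\ge 1$, $r^p\le r\le C_1\psi(r)$. For $r>1$, the elementary inequality $r^p\le C(p,\eps)\bigl(e^{\eps r^2/2}-1\bigr)$ holds for some finite constant depending only on $p$ and $\eps$, since any polynomial is eventually dominated by the exponential and the ratio is uniformly bounded on any compact subinterval of $[1,\infty)$; combining this with $\psi(r)\ge C_1^{-1}\bigl(e^{\eps r^2/2}-1\bigr)$ from \eqref{2-estimate-2-1} gives $r^p\le C_1 C(p,\eps)\psi(r)$. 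Setting $C_2=C_1\max\{1,C(p,\eps)\}$ then completes the argument.

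The only delicate point, and the place where one must be careful, is the large-$r$ lower comparison in \eqref{2-estimate-2-1}: the algebraic factor $r(1+r)$ in the denominator of $\psi_2$ has to be shown not to spoil the equivalence with $r\vee(e^{\eps r^2/2}-1)$. Conceptually this is harmless because exponential growth absorbs any fixed polynomial factor, but one has to make this quantitative by keeping track of the constants carefully, which is essentially what was already done in the asymptotic computation of $\lim_{r\to\infty}\psi_1(r)/\psi_2(r)$ in Lemma \ref{2-lem-1}. All remaining steps are elementary.
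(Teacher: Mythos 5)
The large-$r$ half of your argument contains a genuine error, and it is exactly at the point you flag as ``the only delicate point''. The quantities $\psi_2(r)=(e^{\eps r^2/2}-1)/(r(1+r))$ and $r\vee(e^{\eps r^2/2}-1)$ are \emph{not} comparable up to multiplicative constants on $[0,\infty)$: for $r\ge 1$ the ratio $\psi_2(r)/(e^{\eps r^2/2}-1)=1/(r(1+r))$ tends to $0$, so no constant can give $\psi_2(r)\ge c\,(e^{\eps r^2/2}-1)$ on $[1,\infty)$. The heuristic that ``exponential growth absorbs any fixed polynomial factor'' is only valid if one simultaneously lowers the rate in the exponent (replacing $\eps$ by some $\eps'<\eps$), which is a different statement. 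Moreover, since \eqref{r1} and its lower-bound counterpart together with Lemma \ref{2-lem-1} pin $\psi$ between constant multiples of $\psi_2$, and $\psi_2(r)\asymp e^{\eps r^2/2}/r^2$ as $r\to\infty$, the lower half of \eqref{2-estimate-2-1} with the \emph{same} $\eps$ cannot be reached by your route at all; it only holds with $e^{\eps r^2/2}-1$ replaced by $e^{\eps' r^2/2}-1$ for some $\eps'<\eps$ (or with $\psi_2$ itself). This is in fact a slip in the statement of the corollary --- the paper gives no separate proof and presents it as immediate from the $\psi_2$-sandwich --- but your proposal does not repair it; it asserts the false comparison as provable. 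Your small-$r$ analysis ($\psi_2(r)\asymp r$ on $[0,1]$) and the upper half of \eqref{2-estimate-2-1} are fine.

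The consequence \eqref{2-estimate-2}, which together with the upper half of \eqref{2-estimate-2-1} is what the rest of the paper actually uses, is true but should not be routed through the false lower bound $\psi(r)\ge C_1^{-1}(e^{\eps r^2/2}-1)$. Argue instead directly from $\psi(r)\ge \hat C_0\exp\big(-\int_0^\eta[\kappa^+(v)+cv]\,\d v\big)\,\psi_2(r)$: for $r\le 1$, $\psi_2(r)\ge (e^{\eps r^2/2}-1)/(2r)\ge \eps r/4\ge (\eps/4)\,r^p$; for $r>1$, compare $r^p$ with $\psi_2$ rather than with $e^{\eps r^2/2}-1$, namely $r^p\,r(1+r)\le 2r^{p+2}\le C(p,\eps)\,(e^{\eps r^2/2}-1)$ because $(e^{\eps r^2/2}-1)/r^{p+2}$ is bounded away from $0$ on $[1,\infty)$, whence $r^p\le C(p,\eps)\,\psi_2(r)\le C_2\,\psi(r)$. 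If you want a correct two-sided statement in the spirit of \eqref{2-estimate-2-1}, either state it as $\psi(r)\asymp r\vee\big[(e^{\eps r^2/2}-1)/r^2\big]$ (equivalently $\psi\asymp\psi_2$), or keep the upper bound with $\eps$ and take any $\eps'<\eps$ in the lower bound.
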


Furthermore, we can give an explicit estimate to the constant $C_0$ in Lemma $\ref{2-lem-1}$, which will be used in the exponential convergence rate.

\begin{lemma}\label{2-lem-2}
The constant $C_0$ in Lemma $\ref{2-lem-1}$ has the following expression:
  $$C_0=\max\bigg\{\frac{2e^2}\eps \Big(1+\frac2{\sqrt \eps}\Big) \sqrt{\frac2{c-\eps}},\ \frac{2+\sqrt\eps}{\eps(1-e^{-2})} \bigg[\frac{2\sqrt{2} e^2}{\sqrt{\eps(c-\eps)}} + \frac{1}{c-\eps}\bigg]\bigg\}.$$
\end{lemma}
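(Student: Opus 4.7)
Since $C_0$ is expressed as a maximum of two quantities, it suffices to bound $\psi_1(r)/\psi_2(r)$ separately on $[0,r_0]$ and on $[r_0,\infty)$, where $r_0 := 2/\sqrt{\eps}$. This threshold is forced by three compatibility requirements listed at the end; the first (resp.\ second) term in the stated $\max$ will correspond to the small-$r$ (resp.\ large-$r$) region.

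The universal upper estimate on $\psi_1$ that I would use is obtained from the shifted-Gaussian bound
$$\int_s^\infty e^{-(c-\eps)u^2/2}\,\d u \le e^{-(c-\eps)s^2/2}\sqrt{\frac{\pi}{2(c-\eps)}},$$
which follows from the substitution $u\mapsto u+s$. This cancels the $e^{cs^2/2}$ in the outer integrand and yields $\psi_1(r)\le \sqrt{\pi/(2(c-\eps))}\int_0^r e^{\eps s^2/2}\,\d s$. On $[0,r_0]$ one uses $e^{\eps r^2/2}\le e^2$ to get $\psi_1(r)\le e^2\, r \sqrt{\pi/(2(c-\eps))}$, together with the elementary inequality $e^x-1\ge x$ to lower bound $\psi_2(r)\ge \eps r/(2(1+r))$; combining with $1+r\le 1+2/\sqrt{\eps}$ and $\sqrt{\pi/2}\le\sqrt 2$ produces the first candidate constant.

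On $[r_0,\infty)$ I would split $\psi_1(r)=\int_0^{r_0}+\int_{r_0}^r$. The first piece is bounded as above by $(2e^2/\sqrt{\eps})\sqrt{\pi/(2(c-\eps))}$. For the second piece I switch to the Mill's ratio bound $\int_s^\infty e^{-(c-\eps)u^2/2}\,\d u\le e^{-(c-\eps)s^2/2}/((c-\eps)s)$, producing $(c-\eps)^{-1}\int_{r_0}^r s^{-1}e^{\eps s^2/2}\,\d s$. A single integration by parts, exploiting $e^{\eps s^2/2}/s=(\eps s^2)^{-1}\frac{\d}{\d s}e^{\eps s^2/2}$, gives a boundary term $e^{\eps r^2/2}/(\eps r^2)$ plus a remainder that is absorbed via the estimate $2/(\eps s^2)\le 1/2$ valid on $[r_0,\infty)$, leaving $\int_{r_0}^r s^{-1}e^{\eps s^2/2}\,\d s\le 2 e^{\eps r^2/2}/(\eps r^2)$. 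For the denominator, the inequalities $e^{\eps r^2/2}-1\ge (1-e^{-2})e^{\eps r^2/2}$ and $r(1+r)\le (1+\sqrt{\eps}/2)r^2$ give $\psi_2(r)\ge \frac{2(1-e^{-2})}{2+\sqrt{\eps}}\cdot \frac{e^{\eps r^2/2}}{r^2}$. Using the monotonicity of $r^2 e^{-\eps r^2/2}$ on $[r_0,\infty)$ to estimate the constant part at its maximum value $4/(\eps e^2)$ attained at $r=r_0$ then reproduces the second candidate.

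The main delicate point is coordinating the three thresholds: the integration-by-parts absorption requires $2/(\eps r_0^2)\le 1/2$, the dichotomy $e^x-1\ge (1-e^{-2})e^x$ requires $\eps r_0^2/2\ge 2$, and $r(1+r)\le (1+\sqrt{\eps}/2)r^2$ on $[r_0,\infty)$ requires $r_0\ge 2/\sqrt{\eps}$. These three constraints are simultaneously saturated by the single choice $r_0=2/\sqrt{\eps}$, which is what makes the constant $C_0$ attain the clean closed form stated in the lemma.
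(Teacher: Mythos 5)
Your proof is correct and takes essentially the same route as the paper: the same splitting threshold $r_0=2/\sqrt{\eps}$, the same integration by parts with absorption of the remainder on $[r_0,\infty)$, and the same elementary comparisons $e^x-1\ge x$ (small $r$) and $e^x-1\ge(1-e^{-2})e^x$, $1+1/r\le(2+\sqrt{\eps})/2$ (large $r$). The only difference is cosmetic: where the paper invokes the single Gaussian tail bound $1-\Phi(r)\le 2\phi(r)/(\sqrt{2+r^2}+r)$ and specializes it in the two regimes, you use the shifted-Gaussian bound and the Mills-ratio bound separately, and your sharper estimate $r^2e^{-\eps r^2/2}\le 4/(\eps e^2)$ on $[r_0,\infty)$ in fact yields a constant no larger than the stated second term of $C_0$, so the lemma follows.
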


\begin{proof}
In order to estimate $\psi_1(r)$, we need the following inequality on the tail of standard Gaussian distribution (e.g.\ see \cite[(3)]{Lutz}):
  \begin{equation*}\label{gaussian-tail}
  1-\Phi(r)\leq \frac{2\phi(r)}{\sqrt{2+r^2}+r}\quad \mbox{for all } r>0,
  \end{equation*}
where $\Phi(r)$ and $\phi(r)$ are respectively the distribution and density function of the standard Gaussian distribution $N(0,1)$. Consequently, for $s>0$,
  \begin{align*}
  \int_s^\infty e^{-(c-\eps)u^2/2}\, \d u&=\frac1{\sqrt{c-\eps}}\int_{\sqrt{c-\eps}\, s}^\infty e^{-v^2/2}\,\d v\leq \frac1{\sqrt{c-\eps}}\cdot \frac{2e^{-(c-\eps)s^2/2}}{\sqrt{2+(c-\eps)s^2}+\sqrt{c-\eps}\, s}.
  \end{align*}
Substituting this estimate into the expression of $\psi_1$ leads to
  \begin{equation}\label{2-lem-2.1}
  \psi_1(r)\leq \frac2{\sqrt{c-\eps}}\int_0^r \frac{e^{\eps s^2/2}}{\sqrt{2+(c-\eps)s^2}+\sqrt{c-\eps}\, s}\,\d s=: \frac2{\sqrt{c-\eps}}\tilde\psi_1(r).
  \end{equation}

Next, we consider two cases. (i) If $r\leq 2/\sqrt{\eps}$, then
  \begin{equation*}\label{2-lem-2.2}
  \tilde\psi_1(r)\leq \int_0^r \frac{e^2}{\sqrt{2}}\,\d s=\frac{e^2}{\sqrt 2}\, r.
  \end{equation*}
(ii) If $r>2/\sqrt{\eps}$, then
  \begin{equation}\label{2-lem-2.3}
  \begin{split}
  \tilde \psi_1(r)&=\bigg(\int_0^{2/\sqrt\eps}+\int_{2/\sqrt\eps}^r \bigg) \frac{e^{\eps s^2/2}}{\sqrt{2+(c-\eps)s^2}+\sqrt{c-\eps}\, s}\,\d s\\
  &\leq e^2\sqrt{\frac2\eps} +\frac1{2\sqrt{c-\eps}}\int_{2/\sqrt\eps}^r \frac{e^{\eps s^2/2}}{s}\,\d s.
  \end{split}
  \end{equation}
By the integration by parts formula,
  \begin{align*}
  \int_{2/\sqrt\eps}^r \frac{e^{\eps s^2/2}}{s}\,\d s&=\frac1\eps \int_{2/\sqrt\eps}^r \frac{\d\big( e^{\eps s^2/2}\big)}{s^2}\\
  &=\frac1\eps \bigg[\frac{e^{\eps r^2/2}}{r^2}-\frac\eps 4 e^2 +\int_{2/\sqrt\eps}^r \frac{e^{\eps s^2/2}}{s^3}\,\d s\bigg]\\
  &\leq \frac{e^{\eps r^2/2}}{\eps r^2} +\frac12 \int_{2/\sqrt\eps}^r \frac{e^{\eps s^2/2}}{s}\,\d s.
  \end{align*}
Therefore,
  $$\int_{2/\sqrt\eps}^r \frac{e^{\eps s^2/2}}{s}\,\d s\leq \frac{2e^{\eps r^2/2}}{\eps r^2}.$$
Substituting this estimate into \eqref{2-lem-2.3} yields
  $$\tilde \psi_1(r)\leq e^2\sqrt{\frac2\eps} +\frac{e^{\eps r^2/2}}{\eps\sqrt{c-\eps}\, r^2}.$$
Summarizing the above two cases and using \eqref{2-lem-2.1}, we obtain
  \begin{equation}\label{2-lem-2.4}
  \psi_1(r)\leq \begin{cases}
  \sqrt{\frac2{c-\eps}}e^2 r, & \mbox{if } r\leq \frac2{\sqrt\eps};\\
  \frac{2\sqrt{2} e^2}{\sqrt{\eps(c-\eps)}}+\frac{2e^{\eps r^2/2}}{\eps(c-\eps)r^2}, & \mbox{if } r> \frac2{\sqrt\eps}.
  \end{cases}
  \end{equation}

Furthermore, since $$e^{\eps r^2/2}-1\geq \eps r^2/2,\quad r\ge0,$$  it is easy to show that for all $r\in[0,2/\sqrt{\eps}\,]$, it holds
  \begin{equation}\label{2-lem-2.5}
  \sqrt{\frac2{c-\eps}}e^2 r\leq \frac{C_3}{r(1+r)}\big(e^{\eps r^2/2}-1\big),
  \end{equation}
where
  $$C_3=\frac{2e^2}\eps \Big(1+\frac2{\sqrt \eps}\Big) \sqrt{\frac2{c-\eps}}.$$
On the other hand, for $r>2/\sqrt{\eps}$, we have
  \begin{align*}
  r(1+r)\bigg[\frac{2\sqrt{2} e^2}{\sqrt{\eps(c-\eps)}}+\frac{2e^{\eps r^2/2}}{\eps(c-\eps)r^2}\bigg]
  &= \Big(1+\frac{1}{r}\Big)\bigg[\frac{2\sqrt{2} e^2}{\sqrt{\eps(c-\eps)}} r^2+\frac{2e^{\eps r^2/2}}{\eps(c-\eps)}\bigg]\\
  &\leq \frac{2+\sqrt\eps}{\eps}\bigg[\frac{2\sqrt{2} e^2}{\sqrt{\eps(c-\eps)}} + \frac{1}{c-\eps}\bigg]e^{\eps r^2/2}
  \end{align*}
and
  $$e^{\eps r^2/2}-1\geq (1-e^{-2})e^{\eps r^2/2}.$$
Combining the above two inequalities, we deduce that for all $r>2/\sqrt{\eps}$,
  \begin{equation}\label{2-lem-2.6}
  \frac{2\sqrt{2} e^2}{\sqrt{\eps(c-\eps)}}+\frac{2e^{\eps r^2/2}}{\eps(c-\eps)r^2}\leq \frac{C_4}{r(1+r)}\big(e^{\eps r^2/2}-1\big).
  \end{equation}
with
  $$C_4=\frac{2+\sqrt\eps}{\eps(1-e^{-2})}\bigg[\frac{2\sqrt{2} e^2}{\sqrt{\eps(c-\eps)}} + \frac{1}{c-\eps}\bigg].$$
Having the two inequalities \eqref{2-lem-2.5} and \eqref{2-lem-2.6} in hand, and using \eqref{2-lem-2.4}, we complete the proof.
\end{proof}

We also need the following simple result.

\begin{lemma}\label{2-lem-3}
For all $r>0$,
  \begin{equation}\label{psi-2}
  e^{\eps r^2/2}\geq \bar C_0\psi_2(r),
  \end{equation} where $$\bar C_0=\min\{2,2/\varepsilon\}.$$
\end{lemma}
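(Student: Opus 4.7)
The plan is to rearrange the desired bound into an equivalent, more transparent inequality and then settle it by a one-line application of the elementary estimate $1-e^{-x}\le x$ together with a case analysis on $\eps$.

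First, since $\psi_2(r)=(e^{\eps r^2/2}-1)/[r(1+r)]$ and $r(1+r)>0$ for $r>0$, the claim \eqref{psi-2} is equivalent, after multiplying by $r(1+r)e^{-\eps r^2/2}$, to
$$r(1+r)\ge \bar C_0\big(1-e^{-\eps r^2/2}\big), \qquad r>0.$$
This is the form I would actually prove.

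Second, I would apply the elementary inequality $1-e^{-x}\le x$ with $x=\eps r^2/2\ge 0$, which yields
$$\bar C_0\big(1-e^{-\eps r^2/2}\big)\le \frac{\bar C_0\,\eps}{2}\,r^2.$$
The definition $\bar C_0=\min\{2,2/\eps\}$ is tailored precisely so that $\bar C_0\eps/2\le 1$: if $\eps\le 1$ then $\bar C_0=2$ and $\bar C_0\eps/2=\eps\le 1$, while if $\eps>1$ then $\bar C_0=2/\eps$ and $\bar C_0\eps/2=1$. Combining these two observations gives
$$\bar C_0\big(1-e^{-\eps r^2/2}\big)\le r^2 \le r(1+r),$$
the last step being trivial for $r>0$. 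This establishes \eqref{psi-2}.

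The argument is entirely elementary, so there is no real obstacle; the only subtle point is that the choice $\bar C_0=\min\{2,2/\eps\}$ is exactly what is needed for the constant $\bar C_0\eps/2$ to be at most $1$, which is what turns the linear upper bound on $1-e^{-\eps r^2/2}$ into a bound by $r^2$, and hence by $r(1+r)$.
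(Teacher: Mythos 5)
Your proof is correct. It rests on the same elementary fact as the paper's proof: your inequality $1-e^{-x}\le x$ is just the paper's $e^{x}-1\le xe^{x}$ multiplied through by $e^{-x}$. The difference is purely organizational: the paper splits on $r$, using $e^{\eps r^2/2}-1\le \eps r^2 e^{\eps r^2/2}/2$ together with $r\le 1$ on $[0,1]$, and $e^{\eps r^2/2}-1\le e^{\eps r^2/2}$ together with $r(1+r)\ge 2$ on $[1,\infty)$, which produces the two candidate constants $2/\eps$ and $2$ whose minimum is $\bar C_0$. You instead rearrange the claim to $r(1+r)\ge \bar C_0\bigl(1-e^{-\eps r^2/2}\bigr)$ and run a single uniform chain $\bar C_0\bigl(1-e^{-\eps r^2/2}\bigr)\le \bar C_0\eps r^2/2\le r^2\le r(1+r)$, moving the only case distinction onto $\eps$ (to check $\bar C_0\eps/2\le 1$), which is exactly where the definition $\bar C_0=\min\{2,2/\eps\}$ comes from. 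Your version is marginally cleaner in that it avoids the split in $r$ and makes transparent why this particular $\bar C_0$ appears; the paper's version makes the two regimes (linear behaviour of $\psi_2$ near $0$, quadratic denominator at infinity) more visible, which is the heuristic behind the comparison function $\psi_2$ in the first place. Either argument suffices for the use made of the lemma in \eqref{af-con}.
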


\begin{proof}
It is obvious that if $r\geq 1$, then
  $$\psi_2(r)=\frac{e^{\eps r^2/2}-1}{r(1+r)}\leq \frac{e^{\eps r^2/2}}2.$$
If $0\leq r\leq 1$, then, using the fact that
$$e^{r}-1\le re^{r}, \quad r\ge 0,$$ we have
  $$\psi_2(r)=\frac{e^{\eps r^2/2}-1}{r(1+r)}\leq \frac{\eps r e^{\eps r^2/2} }{2}\leq \frac{\eps  e^{\eps r^2/2} }{2}. $$
The required assertion follows immediately from these two conclusions.
\end{proof}

Finally, we present a consequence of all the previous results in this part.
\begin{corollary} Let $\psi$ be the function defined by \eqref{af}, and $\lambda$ be the constant in Theorem $\ref{main-result}$. Then, for all $r>0$,
\begin{equation}\label{af-con}\psi''(r)+\kappa(r)\psi'(r)\le -\lambda \psi(r).\end{equation} \end{corollary}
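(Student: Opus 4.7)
The plan is to combine three ingredients already assembled in this section: the ODE-style differential inequality \eqref{proof.4} for $\psi$, a sign comparison between $\kappa$ and the cutoff $\kappa^*$, and the two-sided bounds on $\psi$ provided by Lemma \ref{2-lem-1} and Lemma \ref{2-lem-3}. The goal is simply to bookkeep the constants so that the right-hand side of \eqref{proof.4} can be rewritten as $-\lambda\psi(r)$.

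First I would replace $\kappa^*$ by $\kappa$ in \eqref{proof.4}. By the definition of $\kappa^*$, for $r\in(0,\eta]$ one has $\kappa^*(r)=\kappa^+(r)\ge\kappa(r)$, and for $r>\eta$ the hypothesis \eqref{main-result-1} gives $\kappa(r)\le -cr=\kappa^*(r)$. Since $\psi'(r)\ge 0$ by \eqref{af}, it follows that $\kappa(r)\psi'(r)\le\kappa^*(r)\psi'(r)$, and inserting this into \eqref{proof.4} yields
\begin{equation*}
\psi''(r)+\kappa(r)\psi'(r)\le -\exp\bigl(\eps r^2/2\bigr),\qquad r\ge 0.
\end{equation*}

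Next I would bound the right-hand side below by a multiple of $\psi(r)$. From the derivation following \eqref{r1} together with Lemma \ref{2-lem-1}, the upper estimate
\begin{equation*}
\psi(r)\le C_0\exp\!\Big(\tfrac{c}{2}\eta^2+\int_0^\eta\kappa^+(v)\,\d v\Big)\,\psi_2(r)
\end{equation*}
holds for all $r\ge 0$, and Lemma \ref{2-lem-3} gives $\exp(\eps r^2/2)\ge \bar C_0\,\psi_2(r)$. Dividing one by the other,
\begin{equation*}
\exp\bigl(\eps r^2/2\bigr)\ge \frac{\bar C_0}{C_0}\exp\!\Big(-\tfrac{c}{2}\eta^2-\int_0^\eta\kappa^+(v)\,\d v\Big)\,\psi(r)=\lambda\,\psi(r),
\end{equation*}
where the last equality matches exactly the definition of $\lambda$ in Theorem \ref{main-result}, with $\bar C_0=\min\{2,2/\eps\}$ from Lemma \ref{2-lem-3} and $C_0=C_0(\eps)$ from Lemma \ref{2-lem-2}. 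Chaining this with the previous displayed inequality completes the proof of \eqref{af-con}.

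There is no real obstacle here: once \eqref{proof.4}, Lemma \ref{2-lem-1} and Lemma \ref{2-lem-3} are in hand, the corollary is essentially a matter of tracking constants. The only point that deserves a moment of care is confirming the sign comparison $\kappa(r)\le\kappa^*(r)$ separately on the two intervals $(0,\eta]$ and $(\eta,\infty)$, since this is what makes the passage from the cutoff inequality \eqref{proof.4} to the original $\kappa$ legitimate.
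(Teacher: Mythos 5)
Your proposal is correct and follows essentially the same route as the paper: both start from \eqref{proof.4}, lower-bound $e^{\eps r^2/2}$ by $\bar C_0\psi_2(r)$ via Lemma \ref{2-lem-3}, and then convert $\psi_2$ into $\psi$ using \eqref{r1} together with Lemma \ref{2-lem-1} (the paper merely routes this last step explicitly through $\psi_1$), arriving at the same constant $\lambda$. Your explicit check that $\kappa(r)\le\kappa^*(r)$ on $(0,\eta]$ and $(\eta,\infty)$ is a sound justification of a step the paper leaves implicit in deriving \eqref{proof.4}.
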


\begin{proof}
 By \eqref{proof.4}, we deduce from Lemmas \ref{2-lem-2} and \ref{2-lem-3} that
  \begin{align*}
  \psi''(r)+\kappa(r)\psi'(r)&\le -e^{\eps r^2/2}\le  -\bar C_0 \psi_2(r)\leq -\frac{\bar C_0}{C_0}\psi_1(r)\\
  &\leq -\frac{\bar C_0}{C_0} \exp\bigg(-\frac c2 \eta^2-\int_0^\eta \kappa^+(s)\,\d s\bigg) \psi(r),
  \end{align*}
where the last inequality follows from \eqref{r1}. This, along with the definition of the constant $\lambda$ in Theorem \ref{main-result}, yields the required assertion.\end{proof}

\section{Proofs of Theorems and Proposition} \label{section2}
We first give the

\begin{proof}[Proof of Theorem $\ref{main-result}$]Recall that we assume the solution $(X_t)_{t\ge0}$ to
\eqref{Ito-SDE} has finite moments of all orders. In particular, the
left hand side of \eqref{main-result-2} is finite for any $x$,
$y\in\R^d$ and $t>0$.

Let $\psi$ be the function defined by \eqref{af}. According to \eqref{difference.1} and  It\^o's formula, it holds that
  \begin{equation*}\label{proof.3}
  \begin{split}
  \d\psi(r_t)&=2\psi'(r_t)\,\d W_t+2\bigg[\psi''(r_t)+\frac{\psi'(r_t)}{2r_t}
  \big\<\sigma^{-1}Z_t,\sigma^{-1}(b(X_t)-b(Y_t))\big\>\bigg]\d t\\
  &\leq 2\psi'(r_t)\,\d W_t+2\big[\psi''(r_t)+\kappa(r_t)\psi'(r_t)\big]\,\d t.
  \end{split}
  \end{equation*}
Combining this inequality with \eqref{af-con}, we obtain
  \begin{equation}\label{3-proof.1}
  \begin{split}
  \d\psi(r_t)\leq 2\psi'(r_t)\,\d W_t-\lambda \psi(r_t)\,\d t,
  \end{split}
  \end{equation} where $\lambda$ is the constant in Theorem \ref{main-result}.

For $n\geq 1$, define the stopping time
  $$T_n=\inf\{t>0: r_t\notin [1/n, n]\}.$$
Then, for $t\leq T_n$, the inequality \eqref{3-proof.1} yields
  $$\d\big[e^{\lambda t}\psi(r_t)\big]\leq 2e^{\lambda t}\psi'(r_t)\,\d W_t.$$
Therefore,
  $$e^{\lambda (t\wedge T_n)}\psi(r_{t\wedge T_n})\leq \psi(r_0)
  +2\int_0^{t\wedge T_n} e^{\lambda s}\psi'(r_s)\,\d W_s.$$
Taking expectation in the both hand sides of the inequality above leads to
  $$\E\big[e^{\lambda (t\wedge T_n)}\psi(r_{t\wedge T_n})\big]\leq \psi(r_0).$$
Since the coupling process $(X_t,Y_t)_{t\ge0}$ is non-explosive, we have $T_n\uparrow T$ a.s. as $n\to\infty$,
where $T$ is the coupling time. Thus by Fatou's lemma, letting $n\to\infty$ in the above inequality
gives us
  \begin{equation}\label{3-proof.2}
  \E\big[e^{\lambda (t\wedge T)}\psi(r_{t\wedge T})\big]\leq \psi(r_0).
  \end{equation}
Thanks to our convention that $Y_t=X_t$ for $t\geq T$, we have $r_t=0$ for all $t\geq T$.
Therefore,
  $$\E\big[e^{\lambda (t\wedge T)}\psi(r_{t\wedge T})\big]
  =\E\big[e^{\lambda t}\psi(r_t)\ch_{\{T>t\}}\big]=\E[e^{\lambda t}\psi(r_t)].$$
Combining this with \eqref{3-proof.2}, we arrive at
  $$\E\psi(r_t)\leq \psi(r_0)e^{-\lambda t}.$$
That is,
  \begin{equation} \label{r3}
  \E\psi(|\sigma^{-1}(X_t-Y_t)|)\leq \psi(|\sigma^{-1}(x-y)|)e^{-\lambda t}.
  \end{equation}

If $|\sigma^{-1}(x-y)|\le \eta$, then for any $p\ge 1$ and any $t>0$, we deduce from \eqref{2-estimate-2}, \eqref{r3} and \eqref{2-estimate-2-1} that
  \begin{equation}\label{small}
  \begin{split}
  \E\big(|\sigma^{-1}(X_t-Y_t)|^p\big)&\leq C_2\, \E\psi\big(|\sigma^{-1}(X_t-Y_t)|\big)
  \le C_5 e^{-\lambda t}|\sigma^{-1}(x-y)|.
  \end{split}
  \end{equation}
It is clear that
  $$C_6^{-1}|z|\le |\sigma^{-1}z|\le C_6|z|,\quad z\in\R^d$$
for some constant $C_6>1$. Therefore, if $|x-y|\le \eta/C_6$,
then for any $p\ge1$ there is a constant $C_7>0$ such that
  \begin{equation*}
  \E|X_t-Y_t|^p\le C_7 e^{-\lambda t}|x-y|,\quad t>0,
  \end{equation*}
which implies that for any $p\ge1$ and any $x,y\in\R^d$ with $|x-y|\le \eta/C_6$,
  \begin{equation}\label{small-1}
  W_{p}(\delta_x P_t,\delta_y P_t)\le C_7^{1/p}e^{-\lambda t/p}|x-y|^{1/p}.
  \end{equation}

Now for any $x,y\in\R^d$ with $|x-y|>\eta/C_6$, take
$n:=\big[C_6|x-y|/\eta\big]+ 1\geq 2$. We have
  \begin{equation}\label{proof.2}\begin{split}
 \frac{n}{2}\le  n-1\leq \frac{C_6|x-y|}{\eta}\le n.  \end{split}
  \end{equation}
Set $x_i=x+i(y-x)/n$ for $i=0,1,\ldots, n$. Then $x_0=x$ and
$x_n=y$; moreover, \eqref{proof.2} implies
$|x_{i-1}-x_{i}|=|x-y|/n\le \eta/C_6$ for all $i=1,2,\ldots,n$.
Therefore, for all $p\ge 1$, by \eqref{small-1},
  \begin{align*}
  W_{p}(\delta_x P_t,\delta_y P_t)&\leq \sum_{i=1}^n W_{p}(\delta_{x_{i-1}} P_t,\delta_{x_{i}} P_t)\\
  &\leq  C_7^{1/p} e^{-{\lambda}t/p}\sum_{i=1}^n |x_{i-1}-x_{i}|^{1/p}\\
  &\leq C_7^{1/p} e^{-{\lambda}t/p}\, n(\eta/C_6)^{1/p}  \\
  &\leq C_8e^{-{\lambda t}/p}|x-y|,
  \end{align*}
where in the last inequality we have used $n\leq 2C_6|x-y|/\eta$. The proof of Theorem \ref{main-result} is completed.
\end{proof}

Next, we turn to the

\begin{proof}[Proof of Theorem $\ref{thm-theta}$]
Since we assume $\eta>0$, condition \eqref{thm-theta-1} implies that  \eqref{main-result-1} holds with $c$ replaced by $c\eta^{\theta-1}$.  Then, we can directly apply the assertion of Theorem \ref{main-result} to conclude that there exists a constant $\lambda$ (which is given in Theorem \ref{main-result} with $c$ replaced by $c\eta^{\theta-1}$) such that for all $p\ge1$ and $x,y\in\R^d$
  \begin{equation}\label{proof-theta-1}
  W_p(\delta_x P_t,\delta_y P_t)\leq Ce^{-\lambda t/p} (|x-y|^{1/p}\vee |x-y|).
  \end{equation}

To complete the proof, we only need to consider the case that $x,y\in\R^d$ with $|\sigma^{-1}(x-y)|>\eta$ and $t>0$ large enough. For this, we use both the synchronous coupling
and the coupling by reflection defined by \eqref{coupling}. In
details, with \eqref{Ito-SDE}, we now consider
  \begin{equation*}\label{coupling-1}
  \d Y_t=\begin{cases}\sigma\, \d B_t+b(Y_t)\,\d t, & 0\leq t< T_{\eta},\\
  \sigma(\Id-2e_te_t^\ast)\,\d B_t+b(Y_t)\,\d t, & T_{\eta}\le t< T,
  \end{cases}
  \end{equation*}
where
  $$T_{\eta}=\inf\{t>0:|\sigma^{-1}(X_t-Y_t)|=\eta\}$$
and $T=\inf\{t>0:X_t=Y_t\}$ is the coupling time. For $t\geq T$, we
still set $Y_t=X_t$. Therefore, the difference process
$(Z_t)_{t\ge0}=(X_t-Y_t)_{t\ge0}$ satisfies
  \begin{equation*}\label{difference-1}
  \d Z_t=(b(X_t)-b(Y_t))\,\d t,\quad t<T_{\eta}.
  \end{equation*}
As a result,
  $$\d|\sigma^{-1}Z_t|^2=2\big\<\sigma^{-1}Z_t,\sigma^{-1}(b(X_t)-b(Y_t))\big\>\,\d t.$$
Still denoting by $r_t=|\sigma^{-1}Z_t|$, we get
  $$\d r_t\leq 2\kappa(r_t)\,\d t\le -2cr_t^\theta \,\d t,\quad t<T_{\eta},$$
which implies that
  \begin{equation}\label{rrr2}
  T_{\eta}\le\frac{1}{2c(1-\theta)}\left(|\sigma^{-1}(x-y)|^{1-\theta}-\eta^{1-\theta}\right)
  \le \frac{\eta^{1-\theta}}{2c(\theta-1)}=: t_0
  \end{equation}
since $\theta>1$.

Therefore, for any $x,y\in\R^d$ with $|\sigma^{-1}(x-y)|> \eta$,
$p\ge1$ and $t>t_0$, we have
  \begin{equation*}
  \begin{split}
  \E|\sigma^{-1}(X_t-Y_t)|^p& =\E\big[\E^{(X_{T_{\eta}}, Y_{T_{\eta}})}|\sigma^{-1}(X_{t-T_{\eta}}-Y_{t-T_{\eta}})|^p\big]\\
  & \le C_9\E\big[|\sigma^{-1}(X_{T_{\eta}}-Y_{T_{\eta}})| e^{-\lambda (t-T_{\eta})}\big]\\
  &\le C_9\eta e^{\lambda t_0} e^{-\lambda t},
  \end{split}
  \end{equation*}
where in the first inequality we have used \eqref{small}, and the
last inequality follows from \eqref{rrr2}. In particular, we have
for all  $|\sigma^{-1}(x-y)|>\eta$ and $t>t_0$,
  $$\E|X_t-Y_t|^p\le C_{10} e^{-\lambda t}$$ and so
  $$ W_{p}(\delta_x P_t,\delta_y P_t)\le C_{10}e^{-\lambda t}.$$
Combining with all conclusions above, we complete the proof of Theorem \ref{thm-theta}.
\end{proof}

Finally, we present the

\begin{proof}[Proof of  Proposition $\ref{2-prop-1}$]
By the definition of $\kappa$, for all $x,y\in\R^d$ with $|\sigma^{-1}(x-y)|=r_0$, we have
  \begin{equation}\label{2-prop-1.1}
  \frac{\big\<\sigma^{-1}(x-y),\sigma^{-1}(b(x)-b(y))\big\>}{|\sigma^{-1}(x-y)|}\leq -2c.
  \end{equation}
For any fixed $x,y\in\R^d$ with $r=|\sigma^{-1}(x-y)|$ large enough,
let $n_0=[r/r_0]$ be the integer part of $r/r_0$. Denote by $x_0=x$
and $x_{n_0+1}=y$. We can find $n_0$ points
$\{x_1,x_2,\ldots,x_{n_0}\}$ on the line segment linking $x$ to $y$,
such that $|\sigma^{-1}(x_{i-1}-x_i)|=r_0$ for $i=1,2,\ldots, n_0$
and $|\sigma^{-1}(x_{n_0}-x_{n_0+1})|=|\sigma^{-1}(x_{n_0}-y)|\le r_0$.
Then
  \begin{align*}
  &\hskip14pt \frac{\big\<\sigma^{-1}(x-y),\sigma^{-1}(b(x)-b(y))\big\>}{|\sigma^{-1}(x-y)|}\\
  &=\sum_{i=1}^{n_0}\frac{\big\<\sigma^{-1}(x-y),\sigma^{-1}(b(x_{i-1})-b(x_i))\big\>}{|\sigma^{-1}(x-y)|}
  +\frac{\big\<\sigma^{-1}(x-y),\sigma^{-1}(b(x_{n_0})-b(y))\big\>}{|\sigma^{-1}(x-y)|}\\
  &= \sum_{i=1}^{n_0}\frac{\big\<\sigma^{-1}(x_{i-1}-x_{i}),\sigma^{-1}(b(x_{i-1})-b(x_i))\big\>}
  {|\sigma^{-1}(x_{i-1}-x_{i})|}
  +\frac{\big\<\sigma^{-1}(x_{n_0}-y),\sigma^{-1}(b(x_{n_0})-b(y))\big\>}{|\sigma^{-1}(x_{n_0}-y)|}.
  \end{align*}
By \eqref{2-prop-1.1} and our assumption on $b$,
  $$\frac{\big\<\sigma^{-1}(x-y),\sigma^{-1}(b(x)-b(y))\big\>}{|\sigma^{-1}(x-y)|}
  \leq -2cn_0 +\delta_{0}.$$
Next, since ${r}/{r_0}\leq n_0+1$, we have
  $$-2c \frac{r}{r_0}\geq -2cn_0-2c$$
which implies $-2cn_0\leq 2c -2c{r}/{r_0}$. Therefore
  $$\frac{\big\<\sigma^{-1}(x-y),\sigma^{-1}(b(x)-b(y))\big\>}{|\sigma^{-1}(x-y)|}
  \leq \delta_{0}+2c-\frac{2c}{r_0} r$$
for all $x,y\in\R^d$ with $r=|\sigma^{-1}(x-y)|$. As a result, the definition of $\kappa(r)$ leads to
  \begin{align*}
  \kappa(r)\leq \frac12 \delta_{0}+c-\frac{c}{r_0} r
  \leq -\frac{c}{2r_0} r
  \end{align*}
for all $r\geq r_0(\delta_{0}+2c)/c$, and so \eqref{diss-2} holds  with the new constant $c/2r_0$.
\end{proof}

\section{Proofs of Examples and Corollaries}
\begin{proof}[Proof of Example $\ref{1-example-1}$]
(1) Since $\sigma=\Id$, the supremum in the definition of
$\kappa(r)$ is taken over all $x,y\in\R^d$ with $|x-y|=r$. Thus, to
verify $\kappa(r)\geq 0$ for $r>0$ large enough, it suffices to show
that the supremum is nonnegative when $x,y$ are restricted on one of
the coordinate axes with $r=|x-y|$ large enough, that is, we can
assume the dimension is 1. Then
  $$V(x)=-(1+x^2)^{\delta/2},\quad \delta\in(0,1), x\in\R.$$
Now the result follows immediately from the fact that $V'(x)$ is
strictly increasing when $|x|$ is large enough. Indeed, we have
  $$V''(x)=\delta(1+x^2)^{\frac\delta 2-2}[(1-\delta)x^2-1]$$
which is positive if $|x|\geq (1-\delta)^{-1/2}$.

On the other hand, it is easy to see that with the choices of
$\sigma$ and $b$ above, the semigroup $(P_t)_{t\ge0}$ is symmetric
with respect to the probability measure $\mu(\d x)=\frac{1}{Z_V}
e^{V(x)}\,\d x$. Then, according to (the proof of) Corollary
\ref{poincare} below, we know that $\mu(\d x)$ fulfills the
Poincar\'{e} inequality \eqref{poincare-2} if \eqref{main-result-2}
is satisfied with $p=1$; however, this is impossible, see e.g.
\cite[Example 4.3.1 (3)]{Wangbook}.

(2) In this case, $V(x)=-(1+|x|^2)^{\delta/2}$ with
$\delta\in[1,2)$. First, we prove that $V$ is strictly concave on
$\R^d$. Indeed, for all $1\leq i,j\leq d$,
  $$\frac{\partial^2 V}{\partial x_i\partial x_j}(x)=\delta(1+|x|^2)^{\delta/2-2}
  \big[(2-\delta) x_ix_j-\delta_{ij}(1+|x|^2)\big],\quad x\in\R^d.$$
Therefore, for any $z\in\R^d$ with $z\neq 0$,
  \begin{align*}
  \sum_{i,j=1}^d \frac{\partial^2 V}{\partial x_i\partial x_j}(x)z_i z_j
  &=\delta(1+|x|^2)^{\delta/2-2}\sum_{i,j=1}^d\big[(2-\delta)x_ix_j-\delta_{ij}(1+|x|^2)\big]z_i z_j\\
  &=\delta(1+|x|^2)^{\delta/2-2}\big[(2-\delta)\<x,z\>^2-(1+|x|^2)|z|^2\big]\\
  &\leq -\delta(1+|x|^2)^{\delta/2-2}|z|^2<0,
  \end{align*}
which implies $V(x)$ is strictly concave. Hence $\kappa(r)\leq 0$
for all $r\geq 0$.  On the other hand, to show that $\kappa(r)\ge0$
for all $r\geq 0$, as in the proof of (1), we simply look at the one
dimensional case: $$V(x)=-(1+x^2)^{\delta/2},\quad  \delta\in[1,2),
x\in\R.$$ For any fixed $r>0$ and for any $x>0$,
  \begin{align*}
  \kappa(r)&\geq \frac12 (V'(x+r)-V'(x))\\
  &\ge \frac r2 \inf_{x\le s\le x+r} V''(s)\\
  &=\frac r2 \inf_{ x\le s\le x+r}\frac{-\delta[(\delta-1)s^2+1]}{(1+s^2)^{2-\delta/2}},
  \end{align*}
which implies that
  $$\kappa(r)\ge \frac r2\lim_{x\to \infty}
  \inf_{ x\le s\le x+r}\frac{-\delta[(\delta-1)s^2+1]}{(1+s^2)^{2-\delta/2}}=0 .$$
Therefore, $\kappa(r)=0$ for all $r>0$.

We have seen from above that for $V(x)=-(1+|x|^2)^{\delta/2}$ with
$\delta\in[1,2)$, $\kappa(r)=0$ for all $r\geq 0$, thus for any
$x,y\in\R^d$,
  $$\langle b(x)-b(y),x-y\rangle\le 0.$$
Then, the assertion \eqref{1-example-1.1} immediately follows from
(the proof of) Theorem \ref{theorem1}, by simply using the
synchronous coupling, see e.g.\ \cite[p.2432]{BGG}.

Finally we prove the algebraic convergence rate
\eqref{1-example-1.2}. For this, we mainly follow from \cite[Section
5]{CL} or \cite[Section 7.2]{CG} (see also \cite[Theorem 1.1]{PW}).
By \eqref{difference.1},
  $$\d r_t\leq 2\,\d W_t,\quad t< T,$$
where $T$ is the coupling time of the coupling process
$(X_t,Y_t)_{t\ge0}$, and $(W_t)_{t\ge0}$ is the same one-dimensional
Brownian motion as in \eqref{difference}. Hence,
  $$r_t\leq |x-y|+2W_t,\quad  t<T.$$
Let
  $$\tau_z:=\inf \{t>0: W_t=z \}.$$
Then
  $$T\leq \tau_{-|x-y|/2}.$$
Denote by $W_t^\ast=\inf_{0\leq s\leq t}W_s$ which has the same law
as that of $-|W_t|$. Thus for any $t>0$,
  \begin{align*}
  \P(r_t>0)&= \P(T>t)\le \P \big(\tau_{-|x-y|/2}>t\big)\\
  &=\P \big(W_t^\ast>-|x-y|/2\big)=\P \big(-|W_t|>-|x-y|/2\big)\\
  &=2\int_0^{|x-y|/2} \frac1{\sqrt{2\pi t}} e^{-s^2/2t}\,\d s \\
  &\leq \frac{|x-y|}{\sqrt{2\pi t}}.
  \end{align*}
Therefore, for any $f\in C_b(\R^d)$ with $\|f\|_\infty\leq 1$, we
have
  \begin{align*}
  \big|\E(f(X_t)-f(Y_t))\big|&=\big|\E\big[(f(X_t)-f(Y_t))\ch_{\{r_t>0\}}\big]\big|\\
  &\leq 2\,\P(r_t>0) \leq \sqrt{\frac{2}{\pi t}}\, |x-y|.
  \end{align*}
In particular, by the definition of total variation distance,
  \begin{align*}
  d_{TV}(\delta_x P_t,\delta_y P_t)&= \sup\big\{\big|\E(f(X_t)-f(Y_t))\big|:f\in C_b(\R^d),\|f\|_\infty\leq 1\big\}
  \leq \sqrt{\frac{2}{\pi t}}\, |x-y|.
  \end{align*}
The proof is complete.
\end{proof}

Finally we present the proofs of the two corollaries of Theorem \ref{main-result}.

\begin{proof}[Proof of Corollary $\ref{invariant}$]
Recall that for all $p\in[1,\infty)$, $\mathcal{P}_p(\R^d)$ is the
space of all probability measures $\nu$ on $(\R^d,
\mathcal{B}(\R^d))$ satisfying $\int |z|^p\,\nu(\d z)<\infty.$  Note
that, we assume the solution $(X_t)_{t\ge0}$ to \eqref{Ito-SDE} has
finite moments of all orders. In particular, for any $x\in\R^d$,
$t>0$ and $p\ge1$, $\delta_x P_t\in \mathcal{P}_p(\R^d)$. According
to \eqref{main-result-1} and Theorem \ref{main-result}, there is a constant $\lambda>0$ such that for any
$p\in[1,\infty)$ and any $\nu_1$, $\nu_2\in \mathcal{P}_p(\R^d)$,
  \begin{equation}
  \label{invariant1}W_p(\nu_1P_t,\nu_2P_t)\le C_pe^{-\lambda t}W_{\phi_p}(\nu_1,\nu_2),\quad t>0,
  \end{equation}
where $C_p$ is a positive constant. In particular, for any $\nu_1$, $\nu_2\in \mathcal{P}_1(\R^d)$,
  $$W_1(\nu_1P_t,\nu_2P_t)\le C_1e^{-\lambda t}W_1(\nu_1,\nu_2),\quad t>0.$$

Let $t_0>0$ such that $C_1e^{-\lambda_1 t_0}<1$. Then, the map $\nu
\mapsto \nu P_{t_0}$ is a contraction on the complete metric space
$(\mathcal{P}_1(\R^d), W_1)$. Hence, by the Banach fixed point theorem,
there exists a unique probability measure $\mu_{t_0}$ such that
$\mu_{t_0}P_{t_0}=\mu_{t_0}$. Let
$\mu:=t_0^{-1}\int_0^{t_0}\mu_{t_0}P_s\,\d s$. It is easy to see
that $\mu P_t=\mu$ for all $t\in[0,t_0]$ and so for all
$t\in[0,\infty)$. Therefore, $\mu$ is a invariant probability for
the semigroup $(P_t)_{t\ge0}$. Moreover, for any $\nu\in
\mathcal{P}_1(\R^d)$ and $t>0$,
  $$W_1(\nu P_t,\mu)=W_1(\nu P_t,\mu P_t)\le C_1e^{-\lambda t}W_1(\nu,\mu).$$
The inequality above also yields the uniqueness of the invariant measure.

On the other hand, since $b$ is locally bounded and satisfies
\eqref{main-result-1}, it follows from \cite[Theorem 4.3 (ii)]{MT93}
that the unique invariant measure $\mu\in \cap_{p\geq 1}\mathcal{P}_p(\R^d)$.
Now, replacing $\nu_2$ with $\mu$ in \eqref{invariant1}, we arrive
at
$$W_p(\nu_1 P_t,\mu)=W_p(\nu_1 P_t,\mu P_t)\le C_pe^{-\lambda t}W_{\phi_p}(\nu_1,\mu),\quad
t>0$$ for any $\nu_1\in \mathcal{P}_{p}(\R^d)$. The proof is completed.
\end{proof}

\begin{proof}[Proof of Corollary $\ref{poincare}$]
Let $(P_t)_{t\ge0}$ be the semigroup generated by $L=\Delta-\nabla
U\cdot\nabla$. Then $(P_t)_{t\ge0}$ is symmetric with respect to the
probability measure $\mu$. Since the Hessian matrix $\textup{Hess}(U)\geq -K$,
we deduce that $\kappa(r)\leq Kr/2$ for all $r>0$. Moreover, replacing $b$ by $-\nabla U$ in the definition of $\kappa(r)$, we deduce from \eqref{poincare-1} that $\kappa(L)<0$. According to Proposition
\ref{2-prop-1} and Theorem \ref{main-result}, we know that there exist two positive constants $C$, $\lambda$
such that for all $t>0$ and $x$, $y\in\R^d$,
  $$W_1(\delta_x P_t,\delta_y P_t)\le Ce^{-\lambda t}|x-y|.$$
This implies that (e.g.\ see \cite[Theorem 5.10]{Chenbook} or \cite[Theorem 5.10]{Villani})
  \begin{equation}\label{lip}
  \|P_tf\|_{\textrm{Lip}}\le Ce^{-\lambda t} \|f\|_{\textrm{Lip}}
  \end{equation}
holds for any $t>0$ and any Lipschitz continuous function $f$, where $\|f\|_{\textrm{Lip}}$ denotes the Lipschitz semi-norm with respect to the Euclidean norm $|\cdot|$.

On the other hand,  for any $f\in C_c^2(\R^d)$,  by \eqref{lip}, we have
  \begin{align*}\textrm{Var}_{\mu}(f) &=\mu(f^2)-\mu(f)^2=-\int_{\R^d} \int_0^\infty \partial_t (P_tf)^2\,\d t\,\d \mu\\
  &=-2\int_0^\infty \int_{\R^d} P_t f L P_t f\,\d \mu\,\d t\\
  &= \int_0^\infty \int_{\R^d} |\nabla P_t f|^2\,\d \mu\,\d t\\
  &\le  \int_0^\infty \| P_t f\|_{\textrm{Lip}}^2\,\d t\le \frac{C^2}{2\lambda}\|f\|_{\textrm{Lip}}^2.\end{align*}
Replacing $f$ with $P_tf$ in the equality above, we arrive at
  $$ \textrm{Var}_{\mu}(P_tf)\le \frac{C^2}{2\lambda}\|P_tf\|_{\textrm{Lip}}^2 \le \frac{C^4e^{-2\lambda t}}{2\lambda}\|f\|_{\textrm{Lip}}^2.$$

Next, we follow the proof of \cite[Lemma 2.2]{R-Wang01} to show that the inequality above yields the desired Poincar\'{e} inequality. Indeed, for every $f$ with $\mu(f)=0$ and $\mu(f^2)=1$. By the spectral representation theorem, we have
  \begin{align*} \|P_tf\|_{L^2(\mu)}^2&=\int_0^\infty e^{-2ut}\,\d (E_uf,f)\\
  &\ge \left[\int_0^\infty e^{-2us}\, \d (E_uf,f)\right]^{t/s}\\
  &= \|P_sf\|_{L^2(\mu)}^{2t/s},\quad t\ge s,
  \end{align*}
where in the inequality above we have used Jensen's inequality. Thus,
  $$\|P_sf\|_{L^2(\mu)}^2\le\bigg[\frac{C^4e^{-2\lambda t}}{2\lambda}\|f\|_{\textrm{Lip}}^2\bigg]^{s/t}\le \frac{C^{4s/t}}{(2\lambda)^{s/t}}\|f\|_{\textrm{Lip}}^{2s/t} e^{-2\lambda s}. $$
Letting $t\to\infty$, we get that
  $$\|P_sf\|_{L^2(\mu)}^2\le e^{-2\lambda s}, $$
which is equivalent to the desired Poincar\'{e} inequality, see e.g.\ \cite[Theorem 1.1.1]{Wangbook}.
\end{proof}

\ \

\noindent{\bf Acknowledgements.} The authors are grateful to the
referee for his valuable comments which helped to improve the
quality of the paper.


\begin{thebibliography}{99}
\bibitem{BCG} Bakry, D., Cattiaux, P. and Guillin, A.: Rate of convergence for ergodic continuous Markov
processes: Lyapunov versus Poincar\'e. \emph{J. Funct. Anal.} \textbf{254} (2008), no. 3, 727--759.

\bibitem{BGG} Bolley, F., Gentil, I. and Guillin, A.: Convergence to equilibrium in Wasserstein distance for
Fokker--Planck equations. \emph{J. Funct. Anal.} \textbf{263} (2012), no. 8, 2430--2457.

\bibitem{BGG13} Bolley, F., Gentil, I. and Guillin, A.: Uniform convergence to equilibrium for granular media.
\emph{Arch. Ration. Mech. Anal.} \textbf{208} (2013), no. 2, 429--445.


\bibitem{Bu}  Butkovsky, O.: Subgeometric rates of convergence of Markov processes in Wasserstein metric.
\emph{Ann. Appl. Probab.} \textbf{24} (2014), no. 2, 526--552.

\bibitem{CG} Cattiaux, P. and Guillin, A.: Semi log-concave Markov diffusions,
{S\'eminaire de Probabilit\'es XLVI}, Lecture Notes in Mathematics {\bf 2123}, Springer Verlag, 2015, 231--292.

\bibitem{Chenbook} Chen, M.-F.: From Markov Chains to Non-Equilibrium Particle Systems. Second edition.
\emph{World Scientific Publishing Co., Inc., River Edge, NJ}, 2004.

\bibitem{Chen} Chen, M.-F.: Eigenvalues, Inequalties, and Ergodic Theory.
Probability and its Applications (New York). \emph{Springer--Verlag London, Ltd., London}, 2005.

\bibitem{Chen12} Chen, M.-F.: Basic estimates of stability rate for one-dimensional diffusions,
Probability Approximations and Beyond, Lecture Notes in Statistics, 2012, 75--99.

\bibitem{CL}  Chen, M.-F. and Li, S.: Coupling methods for multidimensional diffusion processes.
\emph{Ann. Probab.} \textbf{17} (1989), no. 1, 151--177.

\bibitem{Chen-Wang-97}  Chen, M.-F. and Wang, F.-Y.: Estimation of spectral gap for elliptic operators.
\emph{Trans. Amer. Math. Soc.} \textbf{249} (1997), no. 3, 1239--1267.

\bibitem{Lutz} D\"umbgen, L: Bounding standard Gaussian tail probabilities, arXiv:1012.2063v3.

\bibitem{Eberle11} Eberle, A.: Reflection coupling and Wasserstein contractivity without convexity.
\emph{C. R. Math. Acad. Sci. Paris} \textbf{349} (2011), no. 19--20, 1101--1104.

\bibitem{Eberle} Eberle, A.:\ Reflection couplings and contraction rates for
diffusions,\
arXiv:1305.1233v3.

\bibitem{FGP10a} Flandoli, F., Gubinelli, M. and Priola, E.: Well-posedness of the transport
equation by stochastic perturbation. \emph{Invent. Math.} \textbf{180} (2010), no. 1, 1--53.

\bibitem{Fort05} Fort, G. and Roberts, G.O.: Subgeometric ergodicity of strong Markov processes.
\emph{Ann. Appl. Probab.} \textbf{15} (2005), no. 2, 1565--1589.


\bibitem{Hairer} Hairer, M. and Mattingly, J.C.: Spectral gaps in Wasserstein distances and the
2D stochastic Navier--Stokes equations. \emph{Ann. Probab.} \textbf{36} (2008), no. 6, 2050--2091.

\bibitem{HM2} Hairer, M., Mattingly, J.C. and Scheutzow, M.: Asymptotic coupling and a general form
of Harris's theorem with applications to stochastic delay equations. \emph{Probab. Theory Related
Fields} \textbf{149} (2011), no. 1--2, 223--259.

\bibitem{LR}  Lindvall, T. and Rogers, L.: Coupling of multidimensional diffusions by reflection.
\emph{Ann. Probab.} \textbf{14} (1986), no. 3, 860--872.

\bibitem{Meyn} Meyn, S.P. and Tweedie, R.L.: Markov Chains and Stochastic Stability. Communications and
Control Engineering Series. \emph{Springer--Verlag London, Ltd., London}, 1993.

\bibitem{MT93} Meyn, S.P. and Tweedie, R.L.: Stability of Markovian processes (III): Foster-Lyapunov
criteria for continuous-time processes. \emph{Adv. Appl. Probab.} \textbf{25} (1993), no. 3, 518--548.

\bibitem{PW} Priola, E. and Wang, F.-Y.: Gradient estimates for diffusion semigroups with singular coefficients. \emph{J. Funct. Anal.}
\textbf{236} (2006), no. 1, 244--264.

\bibitem{RS} von Renesse, M. and Sturm, K.: Transport inequalities, gradient estimates, entropy,
and Ricci curvature. \emph{Comm. Pure Appl. Math.} \textbf{58} (2005), no. 7, 923--940.

\bibitem{R-Wang01} R\"{o}ckner, M. and Wang, F.-Y.: Weak Poincar\'{e} inequalities and $L^2$-convergence
rates of Markov smigroups. \emph{J. Funct. Anal.} \textbf{185} (2001), no. 2, 564--603.

\bibitem{Villani} Villani, C.: Optimal Transport: Old and New. Grundlehren der Mathematischen Wissenschaften
[Fundamental Principles of Mathematical Sciences], 338. \emph{Springer--Verlag, Berlin}, 2009.

\bibitem{Wang99} Wang, F.-Y.: Existence of the spectral gap for elliptic operators. \emph{Ark. Math.}
\textbf{37} (1999), no. 2, 395--407.

\bibitem{Wang00} Wang, F.-Y.: Functional inequalities for empty essential spectrum. \emph{J. Funct. Anal.}
\textbf{170} (2000), no. 1, 219--245.

\bibitem{Wangbook} Wang, F.-Y.: Functional Inequalities, Markov Semigroups and Spectral Theory. \emph{Science Press, Beijing,} 2005.

\bibitem{Wang2014} Wang, F.-Y.:  Gradient estimates and applications for SDEs in Hilbert space with multiplicative noise and Dini continuous drift, arXiv:1404.2990v4.


\end{thebibliography}
\end{document}